\documentclass[a4paper,10pt]{amsart}
\usepackage{color}

\usepackage[english]{babel}
\usepackage{amssymb}
\usepackage[sort]{natbib}
\usepackage{mathrsfs}
\usepackage{comment}
\usepackage{mathscinet}
\usepackage{fullpage}
\usepackage[colorlinks,linkcolor=blue,citecolor=blue]{hyperref}
\usepackage{kamil}

\begin{document}
\bibliographystyle{plainnat}
\setcitestyle{numbers}

\title{An Erd\"os--R\'ev\'esz type law of the iterated logarithm for
order statistics of a stationary Gaussian process}

\author{K.\ D\polhk{e}bicki}
\address{Instytut Matematyczny, University of
Wroc\l aw, pl.\ Grunwaldzki 2/4, 50-384 Wroc\l aw, Poland.}
\email{Krzysztof.Debicki@math.uni.wroc.pl}

\author{K.M.\ Kosi\'nski}
\address{Instytut Matematyczny, University of
Wroc\l aw, pl.\ Grunwaldzki 2/4, 50-384 Wroc\l aw, Poland.}
\email{Kamil.Kosinski@math.uni.wroc.pl}

\subjclass[2010]{Primary: 60F15, 60G70; Secondary: 60G22.}

\keywords{Extremes of Gaussian processes, order statistics process, law of the iterated logarithm}

\date{\today}

\begin{abstract}
Let $\{X(t):t\in\mathbb R_+\}$ be a stationary Gaussian process with almost surely (a.s.)
continuous sample paths, $\mathbb E X(t) = 0$, $\mathbb E X^2(t) = 1$ and correlation function satisfying
(i) $r(t)  = 1 - C|t|^{\alpha} + o(|t|^{\alpha})$ as $t\to 0$ for some $0\le\alpha\le 2, C>0$; 
(ii) $\sup_{t\ge s}|r(t)|<1$ for each $s>0$ and
(iii) $r(t) = O(t^{-\lambda})$ as $t\to\infty$ for some $\lambda>0$.
For any $n\ge 1$, consider $n$ mutually independent copies of $X$ and denote by
$\{X_{r:n}(t):t\ge 0\}$ the $r$th smallest order statistics process, $1\le r\le n$.
We provide a tractable criterion for assessing whether, for any positive, non-decreasing function $f$, 
$\mathbb P(\mathscr E_f)=\mathbb P(X_{r:n}(t) > f(t)\, \text{ i.o.})$  equals 0 or 1.
Using this criterion we find that, for a family of functions 
$f_p(t)$, such that $z_p(t)=\mathbb P(\sup_{s\in[0,1]}X_{r:n}(s)>f_p(t))=\mathscr C(t\log^{1-p} t)^{-1}$, $\mathscr C>0$, $\mathbb P(\mathscr E_{f_p})= 1_{\{p\ge 0\}}$. 
Consequently, with $\xi_p (t) = \sup\{s:0\le s\le t, X_{r:n}(s)\ge f_p(s)\}$, for $p\ge 0$, 
$\lim_{t\to\infty}\xi_p(t)=\infty$ and $\limsup_{t\to\infty}(\xi_p(t)-t)=0$ a.s.
Complementary, we prove an Erd\"os--R\'ev\'esz type law of the iterated logarithm lower bound on $\xi_p(t)$, i.e., 
$\liminf_{t\to\infty}(\xi_p(t)-t)/h_p(t) = -1$ a.s., $p>1$; $\liminf_{t\to\infty}\log(\xi_p(t)/t)/(h_p(t)/t) = -1$ a.s., $p\in(0,1]$, where $h_p(t)=(1/z_p(t))p\log\log t$.
\end{abstract}

\maketitle

\section{Introduction and Main Results}
\label{sec:intro}
Let $X=\{X(t):t\in\rr_+\}$ be a stationary Gaussian process with almost surely (a.s.)
continuous sample paths, $\ee X(t) = 0$ and $\ee X^2(t) = 1$.
Suppose that the correlation function of $X$, $r(t) = \ee X(t) X(0)$,
satisfies the following regularity assumptions:
\begin{align}
\label{eq:rasympt}
r(t) & = 1 - C|t|^{\alpha} + o(|t|^{\alpha})\quad \text{as } t\to 0\text{ for some }0\le\alpha\le 2,\quad C>0,\\
r^*(s)&=\sup_{t\ge s}|r(t)|  <1\quad\text{for each } s>0,\nonumber\\
\label{eq:r*}
r(t) &= O(t^{-2\lambda})\quad\text{as } t\toi\text{ for some }\lambda>0.
\end{align}
The analysis of extremes of Gaussian stochastic processes has a long history.
The celebrated \textit{double sum} method, primarily developed by Pickands, e.g., \cite{Pickands69a},
and extended by seminal works of Piterbarg, e.g., \cite{Piterbarg78} or monograph \cite{Piterbarg96},
plays central role in the extreme value theory of Gaussian processes. The
technique developed there appeared to be an universal method, which may deliver answers
also to classes of non-Gaussian processes, see for example, recent contributions of
\cite{Hashorva13,HKP15}.

Laws of the iterated logarithm take important place in
this theory, providing properties of extremal behavior of stochastic processes
on large-time scale. One of important contributions in this domain is
a result on the process $\xi=\{\xi(t):t\ge0\}$, defined via $\xi(t) = \sup\{s:0\le s\le t, X(s) \ge (2\log s)^{1/2}\}$.
In particular, the law of the iterated logarithm implies that, see \cite{Shao92, Qualls71},
\[
\limsup_{t\toi}(\xi(t)-t) = 0\quad \text{a.s.}
\]
Interestingly, under the above regularity assumptions, \citet{Shao92} gave the lower bound of $\xi(t)$ and
obtained an Erd\"os--R\'ev\'esz type law of the iterated logarithm, that is,
\begin{align}
\label{eq:Shao}
\liminf_{t\toi}\frac{\xi(t)-t}{t(\log t)^{(\alpha-2)/(2\alpha)}\cdot\log_2 t}
&=
-\frac{(2+\alpha)\sqrt{\pi}}{\alpha \mathcal H_{\alpha}(2C)^{1/\alpha}}\quad \text{a.s. if }
0<\alpha<2,\\
\label{eq:Shao2}
\liminf_{t\toi}\frac{\log\left(\xi(t)/t\right)}{\log_2 t}
&=
-\frac{2\sqrt{\pi}}{\mathcal H_{2}\sqrt{2C}}\quad \text{a.s. if }
\alpha=2,
\end{align}
where
$\mathcal H_{\alpha}$ is the {\it Pickands' constant}
defined by
$\mathcal H_{\alpha}=\lim_{T\toi} T^{-1} \mathbb{E}e^{\sup_{t\in[0,T]}( \sqrt{2}B_{\alpha/2}(t)-t^{\alpha})}$,
with
$B_{\alpha/2}=\{B_{\alpha/2}(t):t\ge 0\}$ denoting
fractional Brownian motion with Hurst index $\alpha/2\in(0,1]$, i.e.,
a continuous, centered Gaussian process with covariance function
\[
\ee B_{\alpha/2}(s)B_{\alpha/2}(t) =\half(|s|^\alpha+|t|^\alpha-|t-s|^\alpha).
\]
Equation \eqref{eq:Shao} shows that for any $t$ big enough there exists an $s$ in $[t-t(\log t)^{(\alpha-2)/(2\alpha)}\cdot\log_2 t, t]$ such that, almost surely, $X(s)\ge (2\log s)^{1/2}$  and that the length of the interval $t(\log t)^{(\alpha-2)/(2\alpha)}\cdot\log_2 t$ is smallest possible. Moreover, the bigger the parameter $\alpha$ is, the wider the interval will be.

In this paper, we derive a counterpart of Shao's result for the order statistics process $X_{r:n}$.
Namely, for any $n\ge 0$, we consider $X_1,\ldots,X_n$, $n$ mutually independent copies of $X$
and denote by $X_{r:n}=\{X_{r:n}(t):t\ge 0\}$ the $r$th smallest order statistics process, that is,
for each $t\ge 0$, $1\le r\le n$,
\[
X_{1:n}(t) = \min_{1\le j\le n} X_j(t)\le X_{2:n}(t)\le\ldots \le X_{n-1:n}(t)\le \max_{1\le j\le n} X_j(t) = X_{n:n}(t).
\]

Our first contribution is the theorem that extends classical findings of Qualls and Watanabe \cite{Qualls71}.
\begin{theorem}
\label{thm:equiv}
For all functions $f$ that are positive and non-decreasing on some interval $[T,\infty)$, $T>0$, it follows that
\[
\prob{\mathscr E_f}:=\prob{ X_{r:n}(t) > f(t)\quad\text{i.o.}} = 0 \quad \text{or}\quad 1,
\]
as the integral
\[
\mathscr I_f := \int_T^\infty \prob{\sup_{t\in[0,1]} X_{r:n}(t) > f(u)}\D u\quad \text{is finite or infinite}.
\]
\end{theorem}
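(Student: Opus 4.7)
The plan is to follow the classical Qualls--Watanabe template, adapted to the order statistics process. Since $(X_1,\ldots,X_n)$ is a stationary, a.s.\ continuous process on $\rr^n$ and the map $x\mapsto x_{r:n}$ is continuous, $X_{r:n}$ inherits stationarity and path continuity. Fix $T$ so $f$ is positive and non-decreasing on $[T,\infty)$ and set
\[
A_k=\Bigl\{\sup_{t\in[k,k+1]}X_{r:n}(t)>f(k)\Bigr\},\qquad B_k=\Bigl\{\sup_{t\in[k,k+1]}X_{r:n}(t)>f(k+1)\Bigr\},
\]
for integers $k\ge T$. Since $f$ is non-decreasing we have the sandwich $\limsup_k B_k\subseteq\mathscr E_f\subseteq\limsup_k A_k$, and by stationarity $\prob{A_k}=\prob{\sup_{[0,1]}X_{r:n}>f(k)}$ with an analogous formula for $B_k$; a standard integral comparison then shows that $\sum_k\prob{A_k}$, $\sum_k\prob{B_k}$ and $\mathscr I_f$ have the same convergence character.

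If $\mathscr I_f<\infty$, the first Borel--Cantelli lemma applied to $\{A_k\}$ immediately yields $\prob{\mathscr E_f}\le\prob{\limsup_k A_k}=0$. For $\mathscr I_f=\infty$ I would show $\prob{\limsup_k B_k}=1$ in two steps: first a strictly positive lower bound by a Kochen--Stone / second-moment argument, then an upgrade to probability one via a zero-one law. The key estimate for the second moment is a Berman-type covariance bound: for integers $j<k$,
\[
\prob{B_j\cap B_k}-\prob{B_j}\prob{B_k}\le \rho_{k-j}\,\prob{B_j}\prob{B_k}+\eta_{k-j},
\]
with $\rho_m,\eta_m$ small enough that $\sum_{j,k\le N}\prob{B_j\cap B_k}=(1+o(1))(\sum_{k\le N}\prob{B_k})^2$. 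For a single Gaussian process such a bound follows from Berman's normal comparison inequality combined with the decay $r(t)=O(t^{-2\lambda})$ from~\eqref{eq:r*}; the extension to $X_{r:n}$ rests on the fact that $\{\sup_{[k,k+1]}X_{r:n}>u\}$ is a deterministic functional of the joint sampling of $(X_1,\ldots,X_n)$ on that interval, so replacing the cross-interval covariances of each coordinate by zero produces an error controlled by $n\cdot\sup_{t\ge k-j-1}|r(t)|$ times a Piterbarg-type Gaussian density factor at level $f(k+1)$. The zero-one upgrade then follows because $\mathscr E_f$ is a tail event of $(X_1,\ldots,X_n)$, and~\eqref{eq:r*} implies mixing of this stationary Gaussian family, hence triviality of its tail $\sigma$-field.

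The main obstacle will be the Berman-type covariance comparison for the order-statistics functional: one needs an error term that is truly summable in $k-j$ when multiplied by $\prob{B_k}$, despite $x\mapsto x_{r:n}$ being a non-smooth, non-Gaussian operation on the $n$-tuple of Gaussian coordinates. The natural route is to write the exceedance event as a union over cardinality-$(n-r+1)$ subsets $S\subseteq\{1,\ldots,n\}$ of the events $\{\sup_{t\in[k,k+1]}\min_{i\in S}X_i(t)>u\}$ and to apply the Gaussian comparison inequality coordinate-wise, keeping track of the combinatorial factors so that the resulting remainder matches the tail decay of $\prob{\sup_{[0,1]}X_{r:n}>u}$ already appearing in the Qualls--Watanabe integral test.
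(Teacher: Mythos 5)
Your convergence half and your overall template (integral test via stationarity, first Borel--Cantelli, then a Berman-type comparison for the divergent case) match the paper's, but the divergent half has two genuine gaps. The first is the ``upgrade to probability one via a zero-one law.'' The event $\mathscr E_f$ is indeed a tail event, but condition \eqref{eq:r*} (equivalently $r(t)\to 0$) only gives mixing of the Gaussian family, hence triviality of the \emph{invariant} $\sigma$-field; triviality of the \emph{tail} $\sigma$-field is the strictly stronger Kolmogorov property, which for stationary Gaussian processes is not implied by polynomial decay of $r$ (it is tied to regularity/absolute continuity of the spectral measure). So this step, as justified, fails. It is also unnecessary: if your second-moment estimate $\sum_{j,k\le N}\prob{B_j\cap B_k}=(1+o(1))\bigl(\sum_{k\le N}\prob{B_k}\bigr)^2$ really holds, then the Erd\H{o}s--R\'enyi/Kochen--Stone form of Borel--Cantelli (the paper's first lemma) already yields $\prob{\limsup_k B_k}=1$ directly; the paper instead uses the equivalent decomposition $1-\prob{E_n^c\text{ i.o.}}=\lim_m\prod_{k\ge m}\prob{E_k}+\lim_m\bigl(\prob{\bigcap_{k\ge m}E_k}-\prod_{k\ge m}\prob{E_k}\bigr)$ and kills the second term by quantitative asymptotic independence.

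The second gap is that your covariance bound cannot be run on the events $B_k=\{\sup_{[k,k+1]}X_{r:n}>f(k+1)\}$ as written. The order-statistics Berman inequality (the paper imports it as \autoref{lem:Borrel} from DHJL15, rather than decomposing into min-events over subsets $S$) is finite-dimensional, so the suprema must first be replaced by maxima over grids of mesh $\theta f^{-2/\alpha}$, with \autoref{lem:disc_asymp} guaranteeing that the grid maxima keep the divergence of the sum. More importantly, with contiguous blocks $[k,k+1]$ the cross-block correlation is not bounded away from $1$ near the common endpoints, and the comparison error $\exp\bigl(-\hat r(u_j^2+u_k^2)/(2(1+\rho_{jk}))\bigr)$ degenerates to roughly a single tail $\Psi(u)^{\hat r}$ for adjacent blocks; the paper avoids this by separating consecutive unit intervals by gaps of length $\varepsilon$, so that $\rho_{jk}\le r^*(\varepsilon)<1$ for all cross-block pairs. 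Finally, to make the resulting error series summable and dominated by $\bigl(\sum\prob{B_k}\bigr)^2$ one must first reduce to functions with $\tfrac{2}{\hat r}\log t\le f^2(t)\le\tfrac{3}{\hat r}\log t$ (the paper's \autoref{lem:v2}); for an arbitrary non-decreasing $f$ with $\mathscr I_f=\infty$ your error terms $\eta_{k-j}$ need not be controlled. None of these is fatal to the strategy, but each is a missing ingredient that the paper supplies explicitly.
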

\noindent \citet[Theorem 2.2]{Debicki14}, see also \cite{DHJT15}, gave the expression for the asymptotic behavior of the probability in $\mathscr I_f$, namely
\begin{equation}
\label{eq:asymptotics}
\prob{ \sup_{t\in[0,1]} X_{r:n}(t) > u}
= C^{\frac{1}{\alpha}}\binom{n}{\hat r}\mathcal H_{\alpha,\hat r}  u^{\frac{2}{\alpha}} \left(\Psi(u)\right)^{\hat r}(1+o(1)),\quad \as u,
\end{equation}
where $\hat r = n-r+1$, $\Psi(u)=1-\Phi(u)$ and $\Phi(u)$ is the distribution function of unit normal law,
\[
\mathcal H_{\alpha, k} = \lim_{T\toi} T^{-1}\mathcal H_{\alpha,k}(T)\in(0,\infty),
\]
\[
\mathcal H_{\alpha,k}(T)=\int_{\rr^k} e^{\sum_{i=1}^k w_i}
\prob{\sup_{t\in[0,T]}\min_{1\le i\le k}
\left(\sqrt 2 B_{\alpha/2}^{(i)}(t) - t^{\alpha} - w_i\right) > 0
}
\D w_1\ldots\D w_k
\]
and
$B_{\alpha/2}^{(i)}$, $1\le i\le n$,
are mutually independent fractional Brownian motions.
$\mathcal H_{\alpha, k}$ is the \textit{generalized Pickands' constant}
introduced in \citep{Debicki15}; see also \citep{Debicki14}.
Therefore, \autoref{thm:equiv} provides a tractable criterion for settling the dichotomy of $\prob{\mathscr E_f}$.

For instance, let
\[
f_p(s)  = \left(\frac{2}{\hat r}\left(\log s + \left(\frac{2-\hat r\alpha}{2\alpha} +1 -p\right)\log_2 s\right)\right)^{\frac{1}{2}},\quad p\in\rr.
\]
One easily checks that, as $u\toi$,
\begin{equation}
\label{eq:Gf}
\prob{ \sup_{t\in[0,1]} X_{r:n}(t) > f_p(u)} = C^{\frac{1}{\alpha}}\binom{n}{\hat r}\frac{\mathcal H_{\alpha,\hat r}}{(2 \pi)^{\frac{\hat r}{2}}}
\left(\frac{2}{\hat r}\right)^{\frac{2-\hat r\alpha}{2\alpha}}
\left(u\log^{1-p} u\right)^{-1}(1+o(1)).
\end{equation}
Hence, for any $p\in\rr$,
\[
\prob{X_{r:n}(t) > f_p (t)\quad\text{i.o.}}=
\left\{
\begin{array}{cc}
1 & \text{if } p\ge 0 \\
0 & \text{if } p<0  \\
\end{array}
\right. .
\]
Furthermore,
\[
\limsup_{t\toi} \frac{X_{r:n}(t)}{\sqrt{\log t}} =\sqrt\frac{2}{\hat r}
\quad\text{a.s.}
\]
\\
Next, consider the process $\xi_p=\{\xi_p(t):t\ge 0\}$ defined as
\[
\xi_p(t)=\sup\{s:0\le s\le t, X_{r:n}(s)\ge f_p(s)\}.
\]
Since $\mathscr I_{f_p} = \infty$ for $p\ge 0$, \autoref{thm:equiv} implies that
\[
\lim_{t\toi} \xi_p(t) = \infty\quad\text{a.s.} \quad\text{and}\quad
\limsup_{t\toi}(\xi_p(t) - t) = 0\quad\text{a.s.}
\]
Let, cf. \eqref{eq:Gf},
\[
h_p(t)= p\left(\prob{ \sup_{s\in[0,1]} X_{r:n}(s) > f_p(t)} \right)^{-1}\log_2 t.
\]
The second contribution of this paper is an Erd\"os--R\'ev\'esz type of law of the iterated logarithm for the process $\xi_p$.
\begin{theorem}
\label{thm:main}
If $p>1$, then
\[
\liminf_{t\toi}\frac{\xi_p(t)-t}{h_p(t)} = - 1\ \ {\rm a.s.}
\]
If $p\in(0,1]$, then
\[
\liminf_{t\toi}\frac{\log\left(\xi_p(t)/t\right)}{h_p(t)/t } = - 1\ \ {\rm a.s.}
\]
\end{theorem}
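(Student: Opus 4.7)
I would follow the two-sided LIL template. Writing $z_p(u)=\prob{\sup_{s\in[0,1]}X_{r:n}(s)>f_p(u)}$, one has $h_p(t)z_p(t)=p\log_2 t$ by construction. The first half $(\liminf\ge-1)$ comes from the convergence half of Borel--Cantelli applied on a fine grid $(t_k)$ adapted to the scale $h_p$, and the second half $(\liminf\le-1)$ from the divergence half on a sparse grid combined with a Berman-type decorrelation estimate. The central analytic input is a Poisson-type tail estimate
\[
\prob{\sup_{s\in[0,T]}X_{r:n}(s)\le f_p(u)}=\exp\bigl(-Tz_p(u)(1+o(1))\bigr),\quad \as u,
\]
uniform in a suitable range of $T$; at $T=(1\pm\varepsilon)h_p(u)$ the right-hand side becomes $(\log u)^{-(1\pm\varepsilon)p}$, and this rate drives both halves. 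I would prove it by the standard Piterbarg double-sum dissection of $[0,T]$ into Pickands-length blocks, applying \eqref{eq:asymptotics} on each block and gluing them via Berman's inequality using $r^*(s)<1$ and \eqref{eq:r*}.

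\emph{Upper bound.} For $p>1$, fix $\varepsilon>0$ and choose the grid recursively by $t_{k+1}=t_k+\delta h_p(t_k)$ for some small $\delta<\varepsilon/4$. Set $A_k=\{\sup_{s\in[t_k-(1+\varepsilon/2)h_p(t_k),\,t_k]}X_{r:n}(s)\le f_p(t_{k+1})\}$. The Poisson estimate gives $\prob{A_k}=(\log t_k)^{-(1+\varepsilon/2)p(1+o(1))}$, and a routine computation for the adapted grid — $\log t_k$ grows polynomially in $k$ — shows $\sum_k\prob{A_k}<\infty$ once $(1+\varepsilon/2)p>1$. Borel--Cantelli forces $A_k^c$ eventually, and monotonicity of $f_p$ together with the small spacing transfers the discrete statement to continuous time, yielding $\xi_p(t)\ge t-(1+\varepsilon)h_p(t)$ for all large $t$. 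For $p\in(0,1]$ the identical argument runs after the substitution $t=e^u$: the adapted grid becomes $\log t_{k+1}-\log t_k=\delta h_p(t_k)/t_k$, and $\log(\xi_p(t)/t)\cdot t/h_p(t)$ plays the role that $(\xi_p(t)-t)/h_p(t)$ did before.

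\emph{Lower bound.} On a sparse sequence $s_k$ (e.g.\ $s_k=e^{k}$ for $p>1$, sparser for $p\in(0,1]$ so that the intervals below stay disjoint), define
\[
D_k=\{\sup_{s\in I_k}X_{r:n}(s)\le f_p(s_k)\},\qquad I_k=[s_k-(1-\varepsilon)h_p(s_k),\,s_k],
\]
with the multiplicative modification $I_k=[s_k e^{-(1-\varepsilon)h_p(s_k)/s_k},\,s_k]$ when $p\in(0,1]$. The Poisson estimate gives $\prob{D_k}\ge(\log s_k)^{-(1-\varepsilon/2)p}$ for all large $k$, so $\sum_k\prob{D_k}=\infty$. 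With the $I_k$ pairwise disjoint and widely separated, Berman's inequality combined with \eqref{eq:r*} produces
\[
\sum_{j\neq k}\bigl|\prob{D_j\cap D_k}-\prob{D_j}\prob{D_k}\bigr|=o\Bigl(\bigl(\textstyle\sum_k\prob{D_k}\bigr)^2\Bigr),
\]
so the Kochen--Stone lemma yields $\prob{D_k\ \text{i.o.}}>0$; a tail-$\sigma$-algebra argument upgrades this to probability $1$, producing a subsequence along which $\xi_p(s_k)\le s_k-(1-\varepsilon)h_p(s_k)$ or its multiplicative analogue.

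The step I expect to be the main obstacle is establishing the Poisson estimate uniformly on the scale $T\asymp h_p(u)$, especially for $p\in(0,1]$ where $h_p(u)\gg u$ and the windows live on a scale much larger than $u$ itself. This forces the Piterbarg dissection and the subsequent Berman glueing to be performed uniformly on very long sliding windows, and the order-statistics structure means the block-level asymptotics must be carried out through the generalised Pickands constants $\mathcal H_{\alpha,\hat r}$ of \eqref{eq:Gf} rather than the scalar $\mathcal H_\alpha$ used in \citet{Shao92}. Once this estimate is in hand, the remaining Borel--Cantelli and decorrelation arguments are parallel in spirit to those in \citet{Shao92} and \citet{Qualls71}.
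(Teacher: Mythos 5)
Your architecture is the same as the paper's: two one-sided exponential estimates for $\prob{\sup_{[S,T]}X_{r:n}/f_p\le 1}$ obtained by discretizing onto a Pickands-scale grid and gluing with the order-statistics Berman inequality (the paper's Lemmas \ref{lem:bound} and \ref{lem:lbound}, with additive errors $KS^{-\rho}$ rather than a clean two-sided $(1+o(1))$ in the exponent, which suffices since $S^{-\rho}\ll(\log S)^{-cp}$); then convergence Borel--Cantelli on a grid with spacing $o(h_p(T_k))$ for $\liminf\ge-1$, and divergence Borel--Cantelli with decorrelation on a sparser grid for $\liminf\le-1$. Your identification of the uniform long-window estimate as the main obstacle, and of the role of $\mathcal H_{\alpha,\hat r}$, is accurate.

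There is, however, one concrete error in the lower-bound step. With $s_k=e^{k}$ the Poisson estimate gives $\prob{D_k}\asymp(\log s_k)^{-(1-\varepsilon)p}=k^{-(1-\varepsilon)p}$, and for any fixed $p>1/(1-\varepsilon)$ (e.g.\ $p=2$) this series \emph{converges}, so the Kochen--Stone lemma is vacuous and your argument yields nothing. The entire point of this half is that the sequence must be tuned so that $\sum_k\prob{D_k}$ just barely diverges while remaining sparse enough for decorrelation; the paper takes $T_k=\exp\bigl(k^{(1+\varepsilon^2)/p}\bigr)$, giving $\prob{A_k}\gtrsim k^{-(1-\varepsilon^2)(1+\varepsilon^2)}=k^{-(1-\varepsilon^4)}$, which diverges, while $S_{k+1}-T_k\sim T_{k+1}-T_k$ still grows fast enough that $M_{k,t}\le K\exp(-\lambda k^{(1+\varepsilon^2)/p}/4)$ is summable. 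You should replace $e^k$ by this (or any sequence with $\log s_k\asymp k^{\beta/p}$, $\beta>1$ close to $1$). A second, smaller omission: Berman's inequality applies to finite-dimensional Gaussian vectors, so the events $D_k$ must first be replaced by discrete-max events at a slightly lowered threshold $y-\theta^{\alpha/4}/y$, and the discrepancy $\prob{A_k\cap B_k^c}$ controlled separately (the paper's Lemma \ref{lem:discreate_approx} with the choice $\theta_i=y_i^{-8/\alpha}$); your plan subsumes this under ``the standard Piterbarg dissection'' but it is a step that must actually be carried out, since the lower bound requires the continuous-sup event to be sandwiched by the discrete one.
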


Now, let us complementary put $\eta_p = \{\eta_p(t):t\ge 0\}$, where
\[
\eta_p(t) = \inf\{s\ge t: X_{r:n}(s)\ge f_p(s)\}.
\]
Since
\[
\prob{\xi_p(t) - t\le - x} = \prob{\sup_{s\in(t-x,t]}\frac{X_{r:n}(s)}{f_p(s)}< 1}\]
and
\[
\prob{z - \eta_p(z)\le - x} = \prob{\sup_{s\in[z,z+x]}\frac{X_{r:n}(s)}{f_p(s)}< 1},
\]
then it follows that
\begin{equation}
\label{eq:eq}
\liminf_{t\toi}\frac{\xi_p(t)-t}{h_p(t)}=\liminf_{z\toi}\frac{z-\eta_p(z)}{h_p(z)}.
\end{equation}

\autoref{thm:main} shows that for $t$ big enough,
there exists an $s$ in $[t - h_p(t), t]$ (as well as in $[t, t+ h_p(t)]$ by \eqref{eq:eq}) such that
$X_{r:n}(s)\ge f_p(s)$ and that the length of the interval
$h_p(t)$ is smallest possible.
One can retrieve \eqref{eq:Shao}-\eqref{eq:Shao2} by setting $n=1$, and
$p= \frac{2-\hat r\alpha}{2\alpha} + 1 = \frac{2+\alpha}{2\alpha}$.
\autoref{thm:main} not only generalizes \citet[Theorem 1.1]{Shao92}, it also unveils the lacking so far structure of the lower bound of $\xi_p(t)$ by relating it, via $h_p(t)$, to the asymptotics of the tail distribution of the supremum of the underlying process evaluated at $f_p(t)$; in \eqref{eq:Shao} $t(\log t)^{(\alpha-2)/(2\alpha)}$ is of the same asymptotic order as the reciprocal of $\prob{\sup_{s\in[0,1]} X(s) > (2\log t)^{1/2}}$.
This shines new light on this type of results, which appear to be intrinsically connected with Gumbel limit theorems; see, e.g., \citep{Leadbetter83}, where the function $h_p(t)$ plays crucial role. We shall pursue this elsewhere.

The paper is organized as follows.
In \autoref{sec:auxLem} we provide a collection of basic results  on order statistics of stationary Gaussian processes, used throughout the paper, and prove auxiliary lemmas, which constitute building blocks of the proofs of the main results. These are given in the final part of the paper, \autoref{sec:Proofs}.

\section{Auxiliary Lemmas}
\label{sec:auxLem}
We begin with some auxiliary lemmas that are later needed in the proofs.
\\
The following lemma is the general form of the Borel--Cantelli lemma; cf. \citep{Spitzer64}.
\begin{lem}
Consider a sequence of events $\{E_k:k\ge0\}$.
 If
\[
\sum_{k=0}^\infty \prob{E_k} < \infty,
\]
then $\prob{E_n\text{ i.o.}} = 0$. Whereas, if
\[
\sum_{k=0}^\infty \prob{E_k} = \infty
\quad\text{and}\quad
\liminf_{n\toi}\frac{\sum_{1\le k\ne t\le n}\prob{E_k E_t}}{\left(\sum_{k=1}^n\prob{E_k}\right)^2}\le 1,
\]
then $\prob{E_n\text{ i.o.}} = 1$.
\end{lem}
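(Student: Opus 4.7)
The plan is to handle the two directions separately. For the convergent case, I would invoke the standard first Borel--Cantelli argument: with $A_N = \bigcup_{n \ge N} E_n$, the union bound yields $\mathbb P(A_N) \le \sum_{n \ge N} \mathbb P(E_n) \to 0$ as $N \to \infty$, and since $\{A_N\}$ is decreasing, continuity from above gives $\mathbb P(E_n \text{ i.o.}) = \lim_N \mathbb P(A_N) = 0$.

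For the divergent direction, which is essentially the Kochen--Stone lemma, I would work with the counting variables $S_n = \sum_{k=1}^n \mathbf 1_{E_k}$ and apply the second-moment inequality (a consequence of Cauchy--Schwarz applied to $S_n \cdot \mathbf 1_{\{S_n>0\}}$),
\[
\mathbb P(S_n > 0) \ge \frac{(\mathbb E S_n)^2}{\mathbb E S_n^2}.
\]
Expanding $\mathbb E S_n^2 = \sum_{k=1}^n \mathbb P(E_k) + \sum_{1 \le k \ne t \le n} \mathbb P(E_k E_t)$ and dividing through by $(\mathbb E S_n)^2 = \bigl(\sum_{k=1}^n \mathbb P(E_k)\bigr)^2$, the first contribution tends to zero (the series diverges) while the second has $\liminf \le 1$ by hypothesis. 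Hence $\limsup_{n\to\infty} \mathbb P(S_n > 0) \ge 1$, and since $\{S_n > 0\}$ is monotone in $n$ the $\limsup$ is actually the limit, giving $\mathbb P\bigl(\bigcup_{n \ge 1} E_n\bigr) = 1$.

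To promote ``some $E_n$ occurs'' to ``infinitely many occur'', I would rerun the same second-moment argument on the truncated sequence $\{E_k : k \ge N\}$ for each fixed $N$: deleting a finite prefix changes neither the divergence of the series nor the $\liminf$ ratio in the hypothesis, so $\mathbb P(A_N) = 1$ for every $N$, and continuity from above yields $\mathbb P(E_n\text{ i.o.}) = 1$. The only bookkeeping step worth paying attention to is verifying that, after normalising by $(\mathbb E S_n)^2$, the diagonal contribution $\sum_k \mathbb P(E_k)\big/\bigl(\sum_k \mathbb P(E_k)\bigr)^2$ is indeed negligible precisely because the series diverges, so that the off-diagonal ratio appearing in the hypothesis is what controls the limit; beyond this matching the argument is routine and no additional structural assumption on the $E_k$ is used.
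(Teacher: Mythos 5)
The paper does not prove this lemma at all --- it is quoted as the general (Kochen--Stone/Spitzer) form of the Borel--Cantelli lemma with a citation --- and your argument is the standard second-moment proof of exactly that result, and it is correct. The one step that genuinely needed checking, namely that the hypothesis survives deletion of the prefix $\{E_k : k<N\}$, does hold for the reason you give: the numerator only shrinks, while $\sum_{k=N}^{n}\prob{E_k}\sim\sum_{k=1}^{n}\prob{E_k}$ because the discarded prefix sum is a fixed finite constant and the full sum diverges, so the $\liminf$ of the truncated ratio is still at most $1$.
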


The following two lemmas constitute useful tools for approximating the supremum of $X_{r:n}$ on a fixed interval by its maximum on a grid with a sufficiently dense mesh.
\begin{lem}
\label{lem:discreate_approx}
There exist positive constants $K,c$ and $u_0$ such that
\begin{align*}
\mathbb P \left(
\max_{0\le j\le u^{\frac{2}{\alpha}}/\theta }
X_{r:n}(j \theta u^{-\frac{2}{\alpha}})\le u - \frac{\theta^{\frac{\alpha}{4}}}{u},
\sup_{t\in[0,1]} X_{r:n}(t)>u\right)
&\le
K u^{\frac{2\hat r}{\alpha}}
\left(\Psi(u)\right)^{\hat r} \theta^{\frac{\alpha}{2}-1}\Psi(c \theta^{-\frac{\alpha}{4}}),
\end{align*}
for each $\theta>0$ and $u\ge u_0$.
\end{lem}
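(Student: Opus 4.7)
The plan is a Pickands-type discretization combined with an unravelling of the order statistic through union bounds. First I partition $[0,1]$ into subintervals $I_j=[t_j,t_{j+1}]$, $t_j=j\theta u^{-2/\alpha}$, $0\le j\le N-1$ with $N=\lceil u^{2/\alpha}/\theta\rceil$; the event in the lemma is then contained in $\bigcup_{j=0}^{N-1}A_j$, where
\[
A_j=\Big\{\sup_{t\in I_j}X_{r:n}(t)>u,\ X_{r:n}(t_j)\le u-\theta^{\alpha/4}/u\Big\}.
\]

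On $A_j$, if $X_{r:n}(t^*)>u$ for some $t^*\in I_j$, then at least $\hat r$ of the coordinates $X_i$ exceed $u$ at $t^*$; call the index set $A\subset\{1,\dots,n\}$, $|A|=\hat r$. Since $X_{r:n}(t_j)\le u-\theta^{\alpha/4}/u$ forces at most $\hat r-1$ coordinates to exceed $u-\theta^{\alpha/4}/u$ at $t_j$, some $i^*\in A$ must satisfy $X_{i^*}(t_j)\le u-\theta^{\alpha/4}/u$. Union bounds over the $\binom{n}{\hat r}$ choices of $A$ and the $\hat r$ choices of $i^*\in A$, followed by mutual independence of $X_1,\dots,X_n$, bound the contribution of $A=\{1,\dots,\hat r\}$, $i^*=1$ by
\[
\mathbb P\!\left(\sup_{t\in I_j}X_1(t)>u,\,X_1(t_j)\le u-\tfrac{\theta^{\alpha/4}}{u}\right)\prod_{i=2}^{\hat r}\mathbb P\!\left(\sup_{t\in I_j}X_i(t)>u\right).
\]

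I would bound each of the $\hat r-1$ spectator factors, uniformly in $j$, by the loose but convenient Piterbarg-type estimate $\mathbb P(\sup_{t\in[0,1]}X_i(t)>u)\le C_1u^{2/\alpha}\Psi(u)$ consistent with \eqref{eq:asymptotics}, pull $(C_1u^{2/\alpha}\Psi(u))^{\hat r-1}$ out of the $j$-sum, and invoke the one-process Shao estimate
\[
\sum_{j=0}^{N-1}\mathbb P\!\left(\sup_{t\in I_j}X_1(t)>u,\,X_1(t_j)\le u-\tfrac{\theta^{\alpha/4}}{u}\right)\le K_1u^{2/\alpha}\theta^{\alpha/2-1}\Psi(u)\Psi(c\theta^{-\alpha/4}),
\]
which is exactly the $\hat r=n=1$ case of the present lemma (cf.\ \cite{Shao92}). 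Multiplying through and absorbing combinatorial constants into $K$ yields the claimed bound.

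The main difficulty is concentrated in this last one-process inequality. Its proof is a Piterbarg double-sum carried out inside each $I_j$: a further Pickands-scale discretization of $I_j$ with mesh $\sim u^{-2/\alpha}$ replaces the supremum by a maximum over $O(\theta)$ subgrid points, and for each subgrid point $s$ the joint event $\{X_1(s)>u,\,X_1(t_j)\le u-\theta^{\alpha/4}/u\}$ is a bivariate Gaussian tail whose covariance, computed from $r(s-t_j)=1-C|s-t_j|^\alpha+o(|s-t_j|^\alpha)$, yields the $\Psi(c\theta^{-\alpha/4})$ factor; summing across all $j$ and $s$, together with standard continuity/chaining corrections to absorb the discretization error, produces the remaining $u^{2/\alpha}\theta^{\alpha/2-1}\Psi(u)$ factor.
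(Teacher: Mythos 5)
Your argument is essentially the paper's: the same discretization of $[0,1]$ into the grid intervals, the same pigeonhole/union-bound reduction of the order-statistic event to one coordinate satisfying the joint event $\{X(t_j)\le u-\theta^{\alpha/4}/u,\ \sup_{I_j}X>u\}$ and $\hat r-1$ independent spectator coordinates with large suprema, and the same reduction to the known one-process estimate. The only cosmetic difference is that the paper simply cites \citet[Lemma 12.2.5]{Leadbetter83} for that one-process bound (together with \eqref{eq:asymptotics} for the spectator factors) rather than re-deriving it via a double-sum argument as you sketch, so your proposal is correct and matches the paper's proof.
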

\begin{proof}
Note that, by stationarity, there exists a constant $K$, that may vary from line to line, such that, for sufficiently large $u$,
\begin{align*}
\mathbb P &\Bigg(\max_{0\le j\le u^{\frac{2}{\alpha}}/\theta }
X_{r:n}(j \theta u^{-\frac{2}{\alpha}})\le u - \frac{\theta^{\frac{\alpha}{4}}}{u},
\sup_{t\in[0,1]} X_{r:n}(t)>u\Bigg)\\
&\le
\frac{u^{\frac{2}{\alpha}}}{\theta}
\prob{X_{r:n}(0)\le u - \frac{\theta^{\frac{\alpha}{4}}}{u},\sup_{t\in[0,1]} X_{r:n}(t)>u}\\
&\le
\frac{u^{\frac{2}{\alpha}}}{\theta}
\binom{n}{r}\binom{n}{n-r+1}
\mathbb P\Big( \forall_{i=1,\ldots,r}\, X_i(0)\le u - \frac{\theta^{\frac{\alpha}{4}}}{u},
\forall_{j=r,\ldots,n}
\,
\sup_{t\in[0,1]} X_{j}(t)>u\Big)\\
&\le
K
\frac{u^{\frac{2}{\alpha}}}{\theta}
\prob{X_{r}(0)\le u - \frac{\theta^{\frac{\alpha}{4}}}{u}, \sup_{t\in[0,1]} X_{r}(t)>u}
\left(\prob{\sup_{t\in[0,1]} X(t)>u}\right)^{n-r}\\
&\le
K
 u^{\frac{2\hat r}{\alpha}}
\left(\Psi(u)\right)^{\hat r} \theta^{\frac{\alpha}{2}-1}\Psi(c \theta^{-\frac{\alpha}{4}}).
\end{align*}
The last inequality follows from \eqref{eq:asymptotics} and the classical result of \citet[Lemma 12.2.5]{Leadbetter83}, where the constant $c>0$ is given therein.
\end{proof}
The proof of the following lemma follows line-by-line the same reasoning as the proof of \citep[Theorem 2.2]{Debicki14} and thus we omit it.
\begin{lem}
\label{lem:disc_asymp}
For any $\theta>0$, as $u\toi$,
\[
\prob{ \max_{0\le j\le u^{\frac{2}{\alpha}}/\theta} X_{r:n}(j \theta u^{-\frac{2}{\alpha}}) > u}
=
C^{\frac{1}{\alpha}}\binom{n}{\hat r} \frac{\mathcal H_{\alpha,\hat r}(\theta)}{\theta}\left(\Psi(u)\right)^{\hat r}(1+o(1)).
\]
\end{lem}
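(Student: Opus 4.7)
The plan is to execute the classical double sum (Pickands) method, exactly as in the proof of the continuous-supremum result \citep[Theorem 2.2]{Debicki14} underlying \eqref{eq:asymptotics}, with each continuous supremum replaced by the corresponding discrete maximum over the $\theta u^{-2/\alpha}$-spaced grid. After the Pickands rescaling $t\mapsto u^{-2/\alpha}t$, the grid becomes $\theta\mathbb Z\cap[0,u^{2/\alpha}]$, which I partition into consecutive blocks of rescaled length $T$ (sent to infinity after $u\toi$); each block contains $\lfloor T/\theta\rfloor$ grid points.

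The local Pickands-type lemma for one block is the heart of the argument: conditioning on the $n$-tuple $(X_1(0),\ldots,X_n(0))$ at a representative grid point, one decomposes the event into the $\binom{n}{\hat r}$ configurations specifying which $\hat r$ copies are ``high'' (near $u$). The regularity $r(t)=1-C|t|^\alpha+o(|t|^\alpha)$ forces the rescaled conditional increments of those $\hat r$ copies to converge jointly to independent drifted fractional Brownian motions $\sqrt 2 B_{\alpha/2}^{(i)}(t)-t^\alpha$, while the remaining $r-1$ copies stay below $u$ with conditional probability tending to one. Dominated convergence then yields the block asymptotic
\[
C^{1/\alpha}\binom{n}{\hat r}\,\mathcal H_{\alpha,\hat r}^{(\theta)}(T)\,(\Psi(u))^{\hat r},
\]
where $\mathcal H_{\alpha,\hat r}^{(\theta)}(T)$ is the obvious discrete-grid analogue of $\mathcal H_{\alpha,\hat r}(T)$ (continuous sup replaced by max over $\theta\mathbb Z\cap[0,T]$).

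Summing the $\sim u^{2/\alpha}/T$ blocks by Bonferroni gives the upper bound immediately; for the matching lower bound the double sum of cross-block probabilities must be shown to be negligible, which is handled via $r^*(s)<1$ for $s>0$ in the close-pair regime and $r(t)=O(t^{-2\lambda})$ in the far-pair regime (a Berman-type Gaussian comparison), exactly as in \citep{Debicki14}; no continuity of the sup is invoked in that step, so it transfers without change. Letting $T\toi$ and appealing to subadditivity of $T\mapsto\mathcal H_{\alpha,\hat r}^{(\theta)}(T)$ identifies the limit with $\mathcal H_{\alpha,\hat r}(\theta)/\theta$, giving the stated constant.

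The main obstacle is the joint conditional-limit analysis for the order statistic $X_{r:n}$: simultaneously controlling the $\hat r$ high copies (joint convergence to the independent fBm drifts after rescaling) and the $r-1$ low copies (staying strictly below $u$) uniformly over the integration region for the conditioning values. This bookkeeping is precisely the content of \citep[Theorem 2.2]{Debicki14} in the continuous case, which is why the authors can legitimately omit the proof here — the discrete version needs nothing beyond replacing $\sup_{t\in[0,T]}$ by $\max_{t\in\theta\mathbb Z\cap[0,T]}$ throughout.
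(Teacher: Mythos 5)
Your proposal is correct and coincides with the paper's treatment: the paper gives no argument at all beyond the remark that the proof ``follows line-by-line the same reasoning as the proof of \citep[Theorem 2.2]{Debicki14}'', and your sketch is precisely that double-sum argument with each continuous supremum replaced by the maximum over the $\theta u^{-2/\alpha}$-spaced grid, yielding the grid-dependent Pickands-type constant $\mathcal H_{\alpha,\hat r}(\theta)/\theta$ in place of $\mathcal H_{\alpha,\hat r}$. Nothing further is needed.
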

The next lemma follows directly from \citep[Theorem 2.4]{DHJL15} and is a generalization of the classical Berman's inequality to order statistics.
\begin{lem}
\label{lem:Borrel}
For some $n,d\ge 1$, and any $1\le l\le n$ let
$\{\xi_l^{(0)}(i): 1\le i \le d\}$ and $\{\xi_l^{(1)}(i): 1\le i \le d\}$ be
a sequence of $\mathcal N(0,1)$ variables and
set $\sigma^{(\kappa)}_{il,jk} = E{\xi_l^{(\kappa)}(i)\xi_{k}^{(\kappa)}}(j)$, $\kappa=0,1$.
For any $1\le r\le n$ and $1\le i\le d$, let
$\xi_{r:n}^{(\kappa)}(i)$ be the $r$th order statistic
of $\xi_{1}^{(\kappa)}(i),...,\xi_{n}^{(\kappa)}(i)$.
Suppose that, for any $1\le i,j\le d, 1\le l,k\le n, \kappa=0,1$,
\[
\sigma_{il,jk}^{(\kappa)}=
\sigma_{ij}^{(\kappa)} 1_{\{l=k\}}
\]
for some $\sigma_{ij}^{(\kappa)}$. Now define
\[
 \rho_{ij} =\max\left(\left| \sigma_{ij}^{(0)}\right|, \left|\sigma_{ij}^{(1)}\right|\right),\quad
A_{ij}^{(r)} =
 \int_{\sigma_{ij}^{(0)}}^{\sigma_{ij}^{(1)}}
     \frac{\left(1+ |h|\right)^{(n-r)/2}}{(1-h^2)^{\hat r/2}}
     \, dh.
\]
Then, for any $u_1,\ldots,u_d>0$, for some positive constant $C_{n,r}$ depending only on $n$ and $r$,
\begin{align*}
&\prob{
\bigcap_{i=1}^d
\left\{\xi_{r:n}^{(0)}(i)\le u_i\right\}}
-
\prob{\bigcap_{i=1}^d\left\{\xi_{r:n}^{(1)}(i)\le u_i\right\}}
\\
\ &
\le
C_{n,r}
\sum_{1\le i<j\le d}
\left(u_i +u_j\right)^{-(n-r)}
\left(A_{ij}^{(r)}\right)^+
\exp\left(-\frac{\hat r\left(u_i^2+u_j^2\right)}{2(1+\rho_{ij})}\right).
\end{align*}
\end{lem}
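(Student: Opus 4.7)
My plan is to adapt the classical Slepian--Berman interpolation argument to the order-statistic setting, exploiting the crucial assumption $\sigma_{il,jk}^{(\kappa)}=\sigma_{ij}^{(\kappa)}1_{\{l=k\}}$, which says that the $n$ replicas are mutually independent and that the only dependence across times $i\ne j$ operates inside a single replica ($l=k$). First I would build a continuous interpolation between the two systems by deforming a single correlation entry $\sigma_{ij}^{(h)}=(1-h)\sigma_{ij}^{(0)}+h\sigma_{ij}^{(1)}$ at a time, so that the difference
\[
\prob{\bigcap_{i=1}^{d}\{\xi_{r:n}^{(0)}(i)\le u_i\}}-\prob{\bigcap_{i=1}^{d}\{\xi_{r:n}^{(1)}(i)\le u_i\}}
\]
telescopes into a sum over pairs $i<j$ of $\int_{\sigma_{ij}^{(0)}}^{\sigma_{ij}^{(1)}}\partial_h\prob{\cdots}\,\D h$.

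The key analytic step is the Price--Slepian identity $\partial_h\ee F(\xi^{(h)})=\ee[\partial_{x_i}\partial_{x_j}F(\xi^{(h)})]$ applied to a mollified version of $F(x)=\prod_{i=1}^d 1_{\{x_{r:n}(i)\le u_i\}}$, where $x_{r:n}(i)$ is the $r$th order statistic of $x_1(i),\ldots,x_n(i)$. Combinatorially, $\{\xi_{r:n}(i)\le u_i\}$ means that fewer than $\hat r$ of the $n$ replicas exceed $u_i$. Differentiating in $x_l(i)$ and $x_l(j)$ picks out a single replica $l$ and forces it to sit precisely at the exceedance boundary at both times, producing the joint Gaussian density of $(\xi_l(i),\xi_l(j))$ at $(u_i,u_j)$. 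Summing over the $\binom{n}{\hat r}$ choices of exceeding subsets at each of times $i$ and $j$, and iterating the identity across the $\hat r$ exceeding indices, raises this density to the power $\hat r$; the elementary bound $(u_i^2-2hu_iu_j+u_j^2)/(1-h^2)\ge(u_i^2+u_j^2)/(1+|h|)\ge(u_i^2+u_j^2)/(1+\rho_{ij})$ then yields the exponential $\exp(-\hat r(u_i^2+u_j^2)/(2(1+\rho_{ij})))$, while the Gaussian normalizations provide the $(1-h^2)^{-\hat r/2}$ factor inside $A_{ij}^{(r)}$.

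The remaining $n-r$ replicas at each of the two times that do \emph{not} exceed contribute factors $\prob{\xi_l\le u_i}$ and $\prob{\xi_l\le u_j}$, which a Mill's ratio bound converts into the $(u_i+u_j)^{-(n-r)}$ prefactor and the $(1+|h|)^{(n-r)/2}$ numerator of $A_{ij}^{(r)}$; the positive part $(\cdot)^+$ enters because $\sigma_{ij}^{(1)}-\sigma_{ij}^{(0)}$ may have either sign, and only the direction that inflates the probability matters for a one-sided inequality. The main obstacle is the bookkeeping in the second step: rigorously carrying out the combinatorial expansion of the order-statistic indicator under the Price identity, so that after differentiation exactly $\hat r$ replicas at both times contribute Gaussian pair densities while the other $n-r$ contribute integrable Mill's-ratio prefactors. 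This is where the cited result \citep[Theorem 2.4]{DHJL15} does the heavy lifting; everything else is standard interpolation, after a mollification of $1_{\{\cdot\le u\}}$ followed by dominated convergence.
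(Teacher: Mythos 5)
The paper offers no proof of this lemma: it states that the result ``follows directly from \citep[Theorem 2.4]{DHJL15}'' and moves on. Your proposal, despite its length, ends in the same place --- you explicitly assign the ``heavy lifting'' of the combinatorial expansion to that same citation --- so in substance the two justifications coincide, and for the purposes of this paper the citation is all that is required. What you add is a sketch of how the cited comparison inequality is itself proved (covariance interpolation one entry at a time, a Price--Slepian identity applied to a mollified order-statistic indicator, Mills' ratio for the non-exceeding replicas), and that is indeed the right general strategy. One caveat if you intended the sketch to be upgradeable to a self-contained proof: the appearance of the $\hat r$-th power in the exponent is not obtained by ``iterating the identity across the $\hat r$ exceeding indices''. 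The Price identity is applied exactly once per covariance entry $\sigma_{ij}$ and produces a single bivariate Gaussian density of $(\xi_l(i),\xi_l(j))$ at $(u_i,u_j)$; the factor $\exp\left(-\hat r\left(u_i^2+u_j^2\right)/(2(1+\rho_{ij}))\right)$ arises because the differentiated order-statistic event additionally forces $\hat r-1$ further replicas to exceed $u_i$ at time $i$ and $u_j$ at time $j$, contributing tail factors of order $\Psi(u_i)\Psi(u_j)$ each, and it is the product of the one density with these $\hat r - 1$ tail products that is dominated by the stated exponential (with the remaining $n-r$ non-exceeding replicas supplying the $(u_i+u_j)^{-(n-r)}$ prefactor). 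That bookkeeping is precisely the content of \citep[Theorem 2.4]{DHJL15}, so deferring to it is legitimate, but as written your outline asserts rather than derives the step that distinguishes the order-statistic case from classical Berman.
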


\begin{lem}
\label{lem:bound}
Under the conditions of \autoref{thm:main}, for any $\varepsilon\in(0,1)$, there exist positive constants $K$ and $\rho$ depending only on $\varepsilon, \alpha$ and $\lambda$  such that
\[
\mathbb P \left(\sup_{S\le t\le T} \frac{X_{r:n}(t)}{f_p(t)}\le 1\right)
\le
\exp
\left(
	-\frac{(1-\varepsilon)}{(1+\varepsilon)}\int_{S+1}^T\prob{\sup_{t\in[0,1]}X_{r:n}(t)> f_p(u)}\D u
\right)
+ K S^{-\rho},
\]
for any $T-1\ge S\ge K$.
\end{lem}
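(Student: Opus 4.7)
My plan is to bound the left-hand side by discretizing each unit subinterval of $[S,T]$, decoupling the grid maxima across distinct intervals via Lemma~\ref{lem:Borrel}, and applying Lemma~\ref{lem:disc_asymp} on the product side. Since $f_p$ is non-decreasing, for integer endpoints $S<T$ (non-integer endpoints cause only a harmless constant adjustment) I have
\[
\mathbb P\bigl(\sup_{S\le t\le T}X_{r:n}(t)/f_p(t)\le 1\bigr)\le \mathbb P\Bigl(\bigcap_{k=S}^{T-1}\bigl\{\sup_{t\in[k,k+1]}X_{r:n}(t)\le u_k\bigr\}\Bigr),
\]
with $u_k:=f_p(k+1)$. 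Fixing a large $\theta>0$, I introduce for each $k$ the Pickands-scaled grid $G_k\subset[k,k+1]$ with mesh $\theta u_k^{-2/\alpha}$, write $M_k=\max_{G_k}X_{r:n}$, and use the trivial inclusion $\{\sup_{[k,k+1]}X_{r:n}\le u_k\}\subset\{M_k\le u_k\}$.

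Next I apply Lemma~\ref{lem:Borrel} with $\kappa=0$ equal to the original covariance on $\bigcup_k G_k$ and $\kappa=1$ the modified covariance that preserves intra-interval correlations but zeroes out inter-interval ones; under $\kappa=1$ the grid maxima $M_k$ are mutually independent, so
\[
\mathbb P\Bigl(\bigcap_k\{M_k\le u_k\}\Bigr)\le \prod_{k=S}^{T-1}\mathbb P(M_k\le u_k) + \mathcal R(\theta,S,T),
\]
where $\mathcal R$ is the Berman remainder from the lemma. Applying $1-x\le e^{-x}$ together with Lemma~\ref{lem:disc_asymp} and \eqref{eq:asymptotics} gives
\[
\mathbb P(M_k>u_k)\ge \frac{\mathcal H_{\alpha,\hat r}(\theta)}{\theta\,\mathcal H_{\alpha,\hat r}}\bigl(1-o_S(1)\bigr)\,\mathbb P\bigl(\sup_{[0,1]}X_{r:n}>u_k\bigr).
\]
Choosing $\theta$ so large that $\mathcal H_{\alpha,\hat r}(\theta)/(\theta\mathcal H_{\alpha,\hat r})\ge 1-\varepsilon/3$, $S$ so large that the $o_S(1)$ error is $\le\varepsilon/3$, and then using the monotonicity of $f_p$ together with the slowly-varying form \eqref{eq:Gf} to compare the Riemann sum $\sum_{k=S}^{T-1}\mathbb P(\sup_{[0,1]}X_{r:n}>u_k)$ with the integral $\int_{S+1}^T\mathbb P(\sup_{[0,1]}X_{r:n}>f_p(u))\,du$ up to a factor $(1+\varepsilon/3)^{-1}$, the product is bounded by $\exp\bigl(-\tfrac{1-\varepsilon}{1+\varepsilon}\int_{S+1}^T\mathbb P(\sup_{[0,1]}X_{r:n}>f_p(u))\,du\bigr)$.

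The main obstacle will be establishing $\mathcal R(\theta,S,T)\le KS^{-\rho}$ uniformly in $T\ge S+1$. The sum in Lemma~\ref{lem:Borrel} runs over pairs $(t_i,t_j)$ of grid points lying in \emph{distinct} unit intervals, so $|t_i-t_j|\ge 1$, $\rho_{ij}\le r^*(1)<1$, and $A_{ij}^{(r)}\lesssim |r(t_i-t_j)|\le C\min(1,|t_i-t_j|^{-2\lambda})$. With $u_i\asymp\sqrt{(2/\hat r)\log t_i}$, the exponential factor $\exp(-\hat r(u_i^2+u_j^2)/(2(1+\rho_{ij})))$ equals $(t_it_j)^{-1/(1+\rho_{ij})}$ up to polylogarithmic corrections. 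I split the pairs into a \emph{close} regime $|t_i-t_j|\le L$ and a \emph{far} regime $|t_i-t_j|>L$ for a threshold $L$ chosen so that $r^*(L)$ is small: in the close regime the strict inequality $1/(1+r^*(1))>1/2$ combined with the per-interval grid size $\sim(\log S)^{1/\alpha}/\theta$ and summation over $k\ge S$ of $t_k^{-2/(1+r^*(1))}$ gives a contribution $O(S^{-\rho_1})$; in the far regime the correlation decay $|r(t_i-t_j)|\le C|t_i-t_j|^{-2\lambda}$ together with $1/(1+\rho_{ij})$ close to $1$ yields an absolutely convergent double sum dominated by $O(S^{-\rho_2})$. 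Taking $\rho=\min(\rho_1,\rho_2)>0$ (and $K$ large enough to absorb the constants from Lemmas~\ref{lem:Borrel}--\ref{lem:disc_asymp}) completes the argument.
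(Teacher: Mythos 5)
Your overall architecture (discretize, decouple via Lemma~\ref{lem:Borrel}, estimate the product via Lemma~\ref{lem:disc_asymp}, control the Berman remainder) is the right one, but there is a genuine gap in the decoupling step: you partition $[S,T]$ into \emph{contiguous} unit intervals $[k,k+1]$ and then claim that grid points lying in distinct intervals satisfy $|t_i-t_j|\ge 1$, hence $\rho_{ij}\le r^*(1)<1$. This is false: a grid point just to the left of $k+1$ and one just to the right of it lie in distinct intervals but are at distance as small as the mesh $\theta u_k^{-2/\alpha}\to 0$, so $\rho_{ij}=|r(t_j-t_i)|\to 1$ as $k\to\infty$. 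Two things then break. First, for $\hat r\ge 2$ the integrand $(1-h^2)^{-\hat r/2}$ in $A_{ij}^{(r)}$ is not integrable up to $h=1$, so the bound $A_{ij}^{(r)}\lesssim |r(t_i-t_j)|$ (which needs the correlation bounded away from $1$) is unavailable for these pairs. Second, and more fatally, for a boundary-straddling pair at distance $d$ one has $1+\rho_{ij}\approx 2-Cd^{\alpha}$, so the exponential factor is of order $t_k^{-1}$ rather than $t_k^{-2/(1+r^*(1))}$; summing over the $O(L_k)$ such pairs per boundary and then over $k\ge S$ gives a remainder of order $\sum_{k\ge S}k^{-1}(\log k)^{c}=\infty$, so no bound of the form $KS^{-\rho}$ uniform in $T$ is possible. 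Your ``close regime'' analysis, which assumes $\rho_{ij}\le r^*(1)$ throughout, does not see these pairs.

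The paper's proof avoids exactly this by sampling on intervals $I_i=(s_i,s_i+1]$ with $s_i=S+i(1+\varepsilon)$, i.e.\ unit intervals separated by gaps of length $\varepsilon$. Dropping the gaps only increases the probability being bounded, while now any two grid points in distinct intervals satisfy $s_{j,v}-s_{i,u}\ge (j-i)\varepsilon$, so $\rho_{ij}\le r^*(\varepsilon)<1$ uniformly, $A^{(r)}_{ij}\le K|r(s_{j,v}-s_{i,u})|$ holds, and the remainder sums to $O(S^{-\rho})$. Note also that this gap structure, not a Riemann-sum comparison, is the true source of the factor $1/(1+\varepsilon)$ in the exponent: the sampled intervals cover only a $(1+\varepsilon)^{-1}$ fraction of $[S,T]$, so $\sum_i \prob{\sup_{t\in[0,1]}X_{r:n}(t)>f_p(t_i)}\ge (1+\varepsilon)^{-1}\int_{S+1}^{T}\prob{\sup_{t\in[0,1]}X_{r:n}(t)>f_p(u)}\,\mathrm{d}u$. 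With that modification the rest of your argument (large $\theta$ so that $\mathcal H_{\alpha,\hat r}(\theta)/(\theta\mathcal H_{\alpha,\hat r})$ is close to $1$, splitting the remainder into near and far pairs using $r(t)=O(t^{-2\lambda})$) matches the paper's.
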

\begin{proof}
Let, for any $i\ge 0$ and $\varepsilon\in(0,1)$,
\[
s_i = S + i(1+\varepsilon), \quad t_i = s_i + 1,\quad x_i = f_p(t_i), \quad I_i= (s_i, t_i].
\]
For some $\theta>0$, define grid points in the interval $I_i$, as follows
\begin{equation}
\label{def:sk}
s_{i,u} = s_i + uq_i,\quad 0\le u \le L_i,\quad L_i = [1/q_i], \quad q_i = \theta x_i^{-\frac{2}{\alpha}}.
\end{equation}
Since $f_p$ is an increasing function, it easily follows that, with $T(S,\varepsilon)=[(T-S-1)/(1+\varepsilon)]$,
\[
\prob{\sup_{S\le t\le T}\frac{X_{r:n}(t)}{f_p(t)}\le 1}
\le
\prob{\bigcap_{i=0}^{T(S,\varepsilon)}\left\{\sup_{t \in I_i} X_{r:n}(t) \le x_i\right\}}
\le
\prob{\bigcap_{i=0}^{T(S,\varepsilon)}\left\{\max_{0\le u \le L_i} X_{r:n}(s_{i,u}) \le x_i\right\}}.
\]

For any $1\le l\le n$ and $i\ge0$, let $X_{l,i}$ be an independent copy of the process $X_l$. Define a sequence of processes $Y_l=\{Y_l(t):t\in \cup_i I_i\}$ as
$Y_l(t) = X_{l,i}(t)$, if $t\in I_i$. Let $Y_{r:n}=\{Y_{r:n}(t):t\ge 0\}$ be the $r$th order statistic of $Y_1,\ldots, Y_n$.
Put
\begin{align*}
\sigma_{il,jk}^{(0)} &:= \ee X_{l}(i) X_{k}(j)=
r\left(|j-i|\right)1_{\{l=k\}} =: \sigma_{ij}^{(0)} 1_{\{l=k\}},\\
\sigma_{il,jk}^{(1)} &:= \ee Y_{l}(i) Y_{k}(j) =
r\left(|j-i|\right)1_{\{l=k\}} 1_{\{\exists m: i,j\in I_m\}} =:\sigma_{ij}^{(1)}
1_{\{l=k\}},
\end{align*}
and note that
\begin{align}
\nonumber
\rho_{ij} &=\max\left(\left| \sigma_{ij}^{(0)}\right|, \left|\sigma_{ij}^{(1)}\right|\right)=|r\left(|j-i|\right)|,\\
\label{def:A_il}
A_{ij}^{(r)} &= \int_{\sigma_{ij}^{(0)}}^{\sigma_{ij}^{(1)}} \frac{(1+|h|)^{2(n-r)}}{(1-h^2)^{\hat r/2}}\D h
= 1_{\{ \forall m: i,j\notin I_m\}}
\int_{0}^{r(j-i)} \frac{(1+|h|)^{2(n-r)}}{(1-h^2)^{\hat r/2}}\D h=:
1_{\{ \forall m: i,j\notin I_m\}} |\tilde A_{ij}^{(r)}|.
\end{align}
Now using \autoref{lem:Borrel} we find that
\begin{align*}
&\prob{\bigcap_{i=0}^{T(S,\varepsilon)}\left\{\max_{0\le u \le L_i} X_{r:n}(s_{i,u}) \le x_i\right\}}\\
&\le
\prod_{i=0}^{T(S,\varepsilon)}
\prob{\max_{0\le u \le L_i} X_{r:n}(s_{i,u}) \le x_i}\\
&\quad+
C_{n,r}\sum_{0\le i<j\le T(S,\varepsilon)}\sum_{\substack{0\le u\le L_i\\0\le  v \le L_j}}
 \left(x_{i} x_{j}\right)^{-(n-r)}
 \left|\tilde  A_{s_{i,u}s_{j,v}}^{(r)}\right|
 \exp\left(-\frac{\hat r\left(x_i^2+x_j^2\right)}{2(1+|r(s_{j,v}-s_{i,u})|)}\right)\\
&=:P_1 + P_2.
\end{align*}

\noindent\textit{Estimate of $P_1$.}

\vb

Since $X_{r:n}$ is a stationary process, from \autoref{eq:asymptotics}
combined with \autoref{lem:disc_asymp}, for any $\varepsilon\in(0,1)$, sufficiently large $\theta$ and $S$,
\begin{align*}
P_1 &\le \exp\left(-\sum_{i=0}^{T(S,\varepsilon)}
\prob{\max_{0\le u \le L_i} X_{r:n}(s_{i,u}) >x_i}
\right)
\le
\exp\left(-(1-\varepsilon)\sum_{i=0}^{T(S,\varepsilon)}
\prob{\sup_{t\in [0,1]} X_{r:n}(t) > f_p(t_i)}
\right)\\
&\le
\exp\left(-
\frac{1-\varepsilon}{1+\varepsilon}\int_{S+1}^T\prob{\sup_{t\in [0,1]} X_{r:n}(t) > f_p(u)}\D u
\right).
\end{align*}

\noindent\textit{Estimate of $P_2$.}

\vb

Noting that, for any $0\le i<j$, $0\le u\le L_i$, $0\le v\le L_j$;
\begin{align*}
s_{j,v}-s_{i,u}
=
s_j + v  q_j-s_i- u q_i
=
(j-i)(1+\varepsilon) +   v  q_j - uq_i
\ge
 (j-i)\varepsilon,
\end{align*}
we have
$$
\sup_{
	\substack{		
		0\le u\le L_i,\\
		0\le v\le L_j
	}
}
|r(s_{j,v}-s_{i,u})|
\le
\sup_{|s-s'|\ge (j-i)\varepsilon
}
|r(s-s')|=
r^*((j-i)\varepsilon)\le r^*(\varepsilon)<1.
$$
Without loss of generality assume that $\lambda < 2$. From \eqref{eq:r*} it follows that there is $s_0$ such that for every $s>s_0$,
\[
r^*(s)\le s^{-\lambda}\le \min(1,\lambda)/4.
\]
Finally, since the integrand in the definition of $\tilde  A_{s_{i,u}s_{j,v}}^{(r)}$ is continuous and bounded on $[0,r^*(\varepsilon)]$, there exists a generic constant $K$  not depending on $S$ and $T$, which may differ from line to line, such that
\[
\left|\tilde  A_{s_{i,u}s_{j,v}}^{(r)}\right|\le K |r(s_{j,v}-s_{i,u})|\le K r^*((j-i)\varepsilon).
\]
Therefore, for sufficiently large $S$,
\begin{align*}
P_2 &\le K
\sum_{0\le i<j\le T(S,\varepsilon)}
L_iL_j r^*\left((j-i)\varepsilon\right)
\exp\left(-\frac{\hat r(x_i^2+x_j^2)}{2(1+r^*\left((j-i)\varepsilon\right)}\right)\\
&\le
K\left(
\sum_{\substack{0<j-i\le 2s_0 \\ 0\le i<j\le T(S,\varepsilon)}} +
\sum_{\substack{j-i > 2s_0 \\ 0\le i<j\le T(S,\varepsilon)}}
\right) (\cdot)\\
&\le
K\Bigg(
\sum_{i=0}^\infty x_i^{\frac{4}{\alpha}}
\exp\left(-\frac{\hat r x_i^2}{1+r^*\left(\varepsilon\right)}\right)
+
\sum_{\substack{j-i > 2s_0 \\ 0\le i<j\le T(S,\varepsilon)}}
x_i^{\frac{2}{\alpha}}
x_j^{\frac{2}{\alpha}}
(j-i)^{-\lambda}
\exp\left(-\frac{\hat r(x_i^2+x_j^2)}{2(1+\frac{\lambda}{4})}\right)
\Bigg)\\
&\le
K\left(
\sum_{i=0}^\infty
t_i^{-\frac{2}{1+ \sqrt{r^*(\varepsilon)}}}
+
\sum_{\substack{j-i > 2s_0 \\ 0\le i<j\le T(S,\varepsilon)}}
 t_i^{-\frac{1}{1+\frac{\lambda}{2}}}
t_j^{-\frac{1}{1+\frac{\lambda}{2}}}
(j-i)^{-\lambda}
\right).
\end{align*}
We can bound the first sum from the above by
\[
K
\sum_{i=0}^{\infty}
	(S+i)^{-\frac{2}{1+\sqrt {r^*(\varepsilon)}}}
\le
K
S^{-\frac{1-\sqrt{r^*(\varepsilon)}}{4}}.	
\]
The second sum is bounded from above by
\begin{align*}
\sum_{S\le i<j<\infty}^\infty
&
i^{-\frac{1}{1+\frac{\lambda}{2}}}
j^{-\frac{1}{1+\frac{\lambda}{2}}}
(j-i)^{-\lambda}
=
\sum_{j=S}^\infty
j^{-\frac{1}{1+\frac{\lambda}{2}}}
\sum_{i=S}^{j-1}
i^{-\frac{1}{1+\frac{\lambda}{2}}}
(j-i)^{-\lambda}\\
&\le
\sum_{j=S}^\infty
j^{-\frac{1}{1+\frac{\lambda}{2}}}
\left(
(j/2)^{-\lambda}	\sum_{i=S}^{[j/2]}
	i^{-\frac{1}{1+\frac{\lambda}{2}}}
+
(j/2)^{-\frac{1}{1+\frac{\lambda}{2}}}
	\sum_{i=[j/2]}^{j-1}
	(j-i)^{-\lambda}
\right)\\
&\le
K\sum_{j=S}^\infty
j^{-\frac{1}{1+\frac{\lambda}{2}}}
\left(
j^{-\lambda + 1-\frac{1}{1+\frac{\lambda}{2}}}
+
j^{-\frac{1}{1+\frac{\lambda}{2}}}(\log j \cdot 1_{\{\lambda\in[1,2)\}}
+
j^{-\lambda+1}1_{\{\lambda\in(0,1)\}})
\right)\\
&\le
K\left(
\sum_{j=S}^\infty
j^{-\frac{2}{1+\frac{\lambda}{2}}}
\log j \cdot 1_{\{\lambda\in[1,2)\}}
+
\sum_{j=S}^\infty
j^{1-\lambda-\frac{2}{1+\frac{\lambda}{2}}}
\cdot 1_{\{\lambda\in(0,1)\}}
\right)\\
&\le
K\left(
S^{1-\frac{2}{1+\frac{\lambda}{2}}}
\log S \cdot 1_{\{\lambda\in[1,2)\}}
+
S^{2-\lambda-\frac{2}{1+\frac{\lambda}{2}}}
\cdot 1_{\{\lambda\in(0,1)\}}
\right).
\end{align*}
Hence, for some positive constant $\rho$, depending only on $\varepsilon, \alpha$ and $\lambda$,
\[
P_2\le K S^{-\rho},
\]
which finishes the proof.
\end{proof}

\begin{lem}
\label{lem:lbound}
Under the conditions of \autoref{thm:main}, for any $\varepsilon\in(0,1)$, there exist positive constants $K$ and $\rho$ depending only on $\varepsilon, \alpha$ and $\lambda$ such that
\begin{align*}
\mathbb P &\left(
\bigcap_{i=0}^{[T-S]}\left\{
	\max_{0\le u \le [y_i^\frac{2}{\alpha}/\theta_i]} X_{r:n}(S+i+u\theta_i y_i^{-\frac{2}{\alpha}})\le y_i - \frac{\theta_i^{\alpha/4}}{y_i}
	\right\}
	\right)
	\\
&\ge
\frac{1}{4}\exp\left(
-(1+\varepsilon)
\int_{S}^T\prob{\sup_{t\in[0,1]}X_{r:n}(t)> f_p(u)}\D u
\right) - K S^{-\rho},
\end{align*}
for any $T-1\ge S\ge K$, where $y_i = f_p(S+i)$ and $\theta_i = y_i^{-\frac{8}{\alpha}}$.
\end{lem}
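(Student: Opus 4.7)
The strategy is to mirror the proof of \autoref{lem:bound}, applying \autoref{lem:Borrel} in the opposite direction to produce a lower bound on the intersection probability, and factorizing via block-independent copies bounded from below with \autoref{lem:disc_asymp}.

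Let $X_{l,i}$, $1\le l\le n$, $0\le i\le [T-S]$, be independent copies of $X_l$, and define the block-independent process $Y_l(t)=X_{l,i}(t)$ for $t\in[S+i,S+i+1]$, with associated order-statistics process $Y_{r:n}$. Writing $q_i=\theta_i y_i^{-2/\alpha}$, $L_i=[y_i^{2/\alpha}/\theta_i]$, $s_{i,u}=S+i+uq_i$, let $E_i$ denote the event in the statement and $E_i^Y$ its analogue with $X_{r:n}$ replaced by $Y_{r:n}$. Applying \autoref{lem:Borrel} with $\xi^{(0)}$ corresponding to $Y_{r:n}$ at the grid points $(s_{i,u})$ and $\xi^{(1)}$ to $X_{r:n}$ yields
\[
\prob{\bigcap_i E_i}\ge\prob{\bigcap_i E_i^Y}-P_2,
\]
where $P_2$ involves only grid-point pairs in distinct blocks. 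The estimate $P_2\le KS^{-\rho}$ proceeds as in the proof of \autoref{lem:bound}, splitting the double sum according to inter-block distance into short-range (block-distance $\le 2s_0$) and long-range ($>2s_0$) parts and invoking \eqref{eq:r*}.

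By independence of the $Y_l$'s across blocks, $\prob{\bigcap_i E_i^Y}=\prod_i\prob{E_i^Y}=\prod_i\prob{E_i}$. With $p_i=\prob{E_i^c}$, \autoref{lem:disc_asymp} (applied with spacing $q_i$ and level $y_i-\theta_i^{\alpha/4}/y_i$) combined with \autoref{lem:discrete_approx} gives, for any $\delta>0$ and $S$ sufficiently large,
\[
p_i\le(1+\delta)\prob{\sup_{t\in[0,1]}X_{r:n}(t)>f_p(S+i)},
\]
since $\theta_i^{\alpha/4}=y_i^{-2}\to 0$ controls both the grid-continuum comparison (through $\mathcal H_{\alpha,\hat r}(\theta_i)/\theta_i\to\mathcal H_{\alpha,\hat r}$) and the Mills-ratio shift $(\Psi(y_i-\theta_i^{\alpha/4}/y_i))^{\hat r}/(\Psi(y_i))^{\hat r}\to 1$. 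Combining the elementary inequality $1-x\ge e^{-(1+\delta)x}$ valid for small $x$ (hence uniformly in $i$ once $S$ is large) with the Riemann-sum comparison
\[
\sum_{i=0}^{[T-S]}\prob{\sup_{t\in[0,1]}X_{r:n}(t)>f_p(S+i)}\le(1+\delta)\int_S^T\prob{\sup_{t\in[0,1]}X_{r:n}(t)>f_p(u)}\D u+o_S(1)
\]
(which holds by monotonicity of the integrand), and choosing $\delta$ small and $S$ large so that $(1+\delta)^3(1+o_S(1))\le 1+\varepsilon$, one obtains the desired exponential lower bound; the multiplicative constant $1/4$ in the statement accommodates any remaining slack not captured in the exponent.

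The main technical obstacle is the boundary contribution to $P_2$: grid points in adjacent blocks can be separated by as little as $q_i=y_i^{-10/\alpha}$, at which distances $|r|$ approaches $1$ and the integrand $(1-h^2)^{-\hat r/2}$ of $A_{ij}^{(r)}$ is singular when $\hat r\ge 2$. This is handled via the crude bound $A_{ij}^{(r)}\le K(1-|r|)^{-\hat r/2}$, by counting the $O(q_i^{-2}\varepsilon^2)$ pairs of grid points in adjacent blocks at mutual distance $\le\varepsilon$, and by exploiting the decay of the Gaussian factor $\exp(-\hat r y_i^2/(1+|r|))$ together with the asymptotic $y_i^2\sim(2/\hat r)\log(S+i)$ to ensure that the total boundary contribution remains absorbed into the overall $KS^{-\rho}$ bound.
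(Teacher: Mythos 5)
Your overall architecture coincides with the paper's: block-independent copies $Y_l$, \autoref{lem:Borrel} to compare $\prob{\bigcap_i E_i}$ with $\prod_i\prob{E_i}$, the bound $\prob{E_i^c}\le(1+\varepsilon)\prob{\sup_{t\in[0,1]}X_{r:n}(t)>f_p(S+i)}$ via \eqref{eq:asymptotics} (the shift $\theta_i^{\alpha/4}/y_i=y_i^{-3}$ being negligible), the inequality $\prod(1-p_i)\ge\tfrac14 e^{-\sum p_i}$, and the monotone Riemann-sum comparison. The long-range part of the error term is indeed handled exactly as the $P_2$ estimate in \autoref{lem:bound}.

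The gap is in your treatment of the adjacent-block boundary terms, which you correctly identify as the main obstacle but then resolve incorrectly. Your crude bound cannot work: for the closest cross-boundary pair between blocks $i$ and $i+1$ the correlation is $r(s)\to 1$, so the Gaussian factor is only $\exp\bigl(-\hat r(\hat y_i^2+\hat y_j^2)/(2(1+|r|))\bigr)\approx\exp(-\hat r y_i^2/2)\asymp (S+i)^{-1}(\log(S+i))^{c}$, while the comparison integral $\int_0^{r}(1-h^2)^{-\hat r/2}(1+|h|)^{\cdots}\,dh$ is bounded below by a positive constant. Hence even a single such pair per boundary contributes $\gtrsim (S+i)^{-1+o(1)}$ in absolute value, and $\sum_{i=0}^{[T-S]}(S+i)^{-1+o(1)}$ grows like a power of $\log T$ --- it is not $O(S^{-\rho})$ uniformly in $T\ge S+1$, which the lemma requires (and which Step 2 of the proof of \autoref{thm:main} genuinely needs, since there $T_k-S_k\toi$). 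No amount of sharpening the count of close pairs or the singularity exponent of $(1-h^2)^{-\hat r/2}$ repairs this, because the obstruction is the single factor $a_i^{-1}$ coming from $1+|r|\approx 2$. The paper's fix is not analytic but structural: the comparison inequality of \autoref{lem:Borrel} is one-sided, so in the lower-bound direction only the \emph{negative part} of the covariance discrepancy enters. By \eqref{eq:rasympt} there is $s_0>0$ with $r(s)\ge 1-2|s|^{\alpha}>0$ for $0\le s\le s_0$, so for adjacent-block grid pairs at distance $\le s_0$ one has $\tilde A^{(r)}>0$ and therefore $(-\tilde A^{(r)})^+=0$ (see \eqref{eq:est2}): these pairs contribute \emph{nothing} to $P_2'$. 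All remaining cross-block pairs are at distance $>s_0$, hence have $|r|\le r^*(s_0)<1$, and are then absorbed into the $KS^{-\rho}$ bound exactly as in \autoref{lem:bound}. This sign observation is the one idea your proposal is missing.
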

\begin{proof}
Let, for any $i\ge 0$, $a_i = S + i$ so that $y_i = f_p(a_i)$. Define grid points in the interval $(a_i, a_{i+1}]$ as follows
\begin{equation}
\label{eq:skl}
a_{i,u} = a_i + uq_i,\quad0\le u \le L_i,\quad L_i = [1/q_i], \quad q_i = \theta_i y_i^{-\frac{2}{\alpha}}.
\end{equation}
Finally, put $\hat y_i =  y_i - \theta_i^{\frac{\alpha}{4}}/y_i$.
Similarly as in the proof of \autoref{lem:bound}, using \autoref{lem:Borrel} we have
\begin{align*}
\mathbb P &
\left(
	\bigcap_{i=0}^{[T-S]}
	\left\{
		\max_{0\le u\le L_i} X_{r:n}(a_{i,u})\le \hat y_i
	\right\}
\right)\\
&\ge
\prod_{i=0}^{[T-S]} \prob{\max_{0\le u\le L_i} X_{r:n}(a_{i,u})\le \hat y_i}\\
&\quad -
C_{n,r}
\sum_{0\le i<j\le [T-S]}\sum_{\substack{0\le u\le L_i\\0\le v\le L_j}}
 \left(\hat y_i \hat y_j\right)^{-(n-r)}
 \left(-\tilde A_{a_{i,u}a_{j,v}}^{(r)}\right)^+\exp\left(-\frac{\hat r\left(\hat y_i^2+\hat y_j^2\right)}{2(1+|r(a_{j,v}-a_{i,u})|)}\right)\\
 &=: P_1'-P_2',
\end{align*}
where $\tilde A_{a_{i,u}a_{j,v}}^{(r)}$ is as in \eqref{def:A_il}.

\vb

\noindent\textit{Estimate of $P_1'$.}

\vb

Note that, by \autoref{lem:disc_asymp} combined with \autoref{eq:asymptotics},
\begin{align*}
P_1'
&\ge
 \frac{1}{4}
 \exp\left(
- \sum_{i=0}^{[T-S]}\prob{\max_{0\le u\le L_i} X_{r:n}(a_{i,u})> \hat y_i}
\right)
\ge
 \frac{1}{4}
 \exp\left(
- \sum_{i=0}^{[T-S]}\prob{\sup_{t\in[0,1]}X_{r:n}(t)> \hat y_i}
\right)\\
&\ge
 \frac{1}{4}
 \exp\left(
- (1+\varepsilon)\sum_{i=0}^{[T-S]}\prob{\sup_{t\in[0,1]}X_{r:n}(t)> y_i}
\right)\\
&\ge
\frac{1}{4}\exp\left(
-(1+\varepsilon)
\int_{S}^T\prob{\sup_{t\in[0,1]}X_{r:n}(t)> f_p(u)}\D u
\right),
\end{align*}
provided that $S$ is sufficiently large.

\vb

\noindent\textit{Estimate of $P_2'$.}

\vb

Noting that, for $j\ge i+2$, and any $0\le u\le L_i$, $0\le v\le L_j$;
\[
a_{j,v}-a_{i,u} = a_j + v q_j - a_i- uq_i \ge j-i-1,
\]
we have
\begin{equation}
\label{eq:est1}
\sup_{
	\substack{		
		0\le u\le L_i\\
		0\le v\le L_j
	}
} |r(a_{j,v}-a_{i,u} )|\le
\sup_{|s-s'|\ge j-i-1}
|r(s-s')|=
 r^*(j-i-1)\le r^*(1)<1.
\end{equation}
Since the integrand in definition of $\tilde  A_{a_{i,u}a_{j,v}}^{(r)}$ is continuous and bounded on  $[0,r^*(1)]$, there exists a constant $K$ such that
\[
\left|\tilde  A_{a_{i,u}a_{j,v}}^{(r)}\right|\le K r(a_{j,v}-a_{i,u})\le K r^*(j-i-1)<K.
\]
On the other hand, by
\eqref{eq:rasympt}, there exist positive constants $s_0<1$, such that, for every $0\le s\le s_0$,
\[
\tilde  A_{0s}^{(r)}\ge r(s)\ge 1 - 2|s|^{\alpha}>0.
\]
Hence,
\begin{align}
\label{eq:est2}
(-\tilde A_{a_{i,u}a_{j,v}}^{(r)})^+ = 0, &\quad \text{if}\quad
j=i+1, \quad 1 + v q_j-uq_i \le s_0,\\
\label{eq:est3}
|r(a_{j,v}-a_{i,u})|\le
r^*(s_0)<1,
&\quad\text{if}\quad
j=i+1,\quad 1 + v q_j-uq_i > s_0
\end{align}
Therefore, by \eqref{eq:est1}--\eqref{eq:est3} we obtain
\begin{align*}
P_2' &\le
\sum_{\substack{0\le i\le [T-S]-1 \\ j = i+1}}\sum_{\substack{0\le u\le L_i\\0\le v\le L_j}}
\frac{1}{\sqrt{1- r^*(s_0)}}
\exp\left(-\frac{\hat r(\hat y_i^2+\hat y_j^2)}{2(1+r^*(s_0))}\right)\\
&\quad +
\sum_{\substack{0\le i\le [T-S]-2 \\ i+2 \le j \le [T-S] }}\sum_{\substack{0\le u\le L_i\\0\le v\le L_j}}
\frac{r^*(j-i-1)}{\sqrt{1- r^*(1)}}
\exp\left(-\frac{\hat r(\hat y_i^2+\hat y_j^2)}{2(1+r^*(j-i-1))}\right).
\end{align*}
Completely similar to the estimation of $P_2$ in the proof of \autoref{lem:bound}, we can arrive that there exist positive constants $K$ and $\rho$, independent of $S$ and $T$, such that, for sufficiently large $S$,
\[
P_2'\le K S^{-\rho}.
\]
\end{proof}

The following lemma is a straightforward modification of Lemma 3.1 and 4.1 of \citet{Watanabe70}
and \citet[Lemma 1.4]{Qualls71}.
\begin{lem}
\label{lem:v2}
If \autoref{thm:equiv} is true under the additional condition that for large $t$,
\begin{equation}
\label{eq:condf}
\frac{2}{\hat r}\log t \le f^2(t) \le \frac{3}{\hat r} \log t,
\end{equation}
it is true without the additional condition.
\end{lem}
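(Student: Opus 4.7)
The approach is the classical truncation argument of \citet{Watanabe70} and \citet{Qualls71}, adapted to the order-statistics setting. Given a positive non-decreasing $f$, define the envelope $g(t)=\sqrt{(2/\hat r)\log t}$, $h(t)=\sqrt{(3/\hat r)\log t}$ and the truncated function
\[
\tilde f(t)=\bigl(f(t)\vee g(t)\bigr)\wedge h(t),
\]
which is non-decreasing (as max and min of non-decreasing functions are non-decreasing) and satisfies \eqref{eq:condf} for all sufficiently large $t$; hence the hypothetical version of \autoref{thm:equiv} applies to $\tilde f$. The plan is to transfer the dichotomy to $f$ by showing that $\mathscr I_f<\infty\iff\mathscr I_{\tilde f}<\infty$ and $\prob{\mathscr E_f}=\prob{\mathscr E_{\tilde f}}$ up to contributions that are null or trivially bounded.

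The convergent direction $\mathscr I_f<\infty\Rightarrow\prob{\mathscr E_f}=0$ holds without any truncation: by the first Borel--Cantelli lemma applied to $E_n=\{\exists\,t\in[n,n+1]:X_{r:n}(t)>f(t)\}$, stationarity and the monotonicity of $f$ give $\prob{E_n}\le\prob{\sup_{[0,1]}X_{r:n}>f(n)}$, and the latter sums to a constant multiple of $\mathscr I_f$.

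For the divergent direction, I would split the half-line into three regions $A=\{f>h\}$, $C=\{g\le f\le h\}$ and $B=\{f<g\}$. Using \eqref{eq:asymptotics}, $\prob{\sup_{[0,1]}X_{r:n}>h(u)}=O(u^{-3/2}(\log u)^{(2/\alpha-\hat r)/2})$ is integrable on $[T,\infty)$, so $\mathscr I_h<\infty$; consequently the contribution of $A$ to both $\mathscr I_f$ and $\mathscr I_{\tilde f}$ is finite, and the $A$-part of $\mathscr E_f$ lies inside $\mathscr E_h$, which has probability $0$ by the hypothesis applied to $h$. On $C$, $f\equiv\tilde f$ and both the integral and the event coincide. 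On $B$, where $f<g$, I would introduce an auxiliary $g_p(t)=\sqrt{(2/\hat r)\log t+p\log_2 t}$ with $p>0$ small enough that $g_p$ still satisfies \eqref{eq:condf} for large $t$ and $\mathscr I_{g_p}=\infty$ (such $p$ exists by direct computation from \eqref{eq:asymptotics}: the integrand at level $g_p$ is of order $u^{-1}(\log u)^{-p\hat r/2+1/\alpha-\hat r/2}$); the hypothesis gives $\prob{\mathscr E_{g_p}}=1$, and combined with the a.s.\ upper envelope $X_{r:n}(t)\le(1+\varepsilon)g(t)$ for large $t$ (from the hypothesis applied to $h$, giving $\prob{\mathscr E_h}=0$) one extracts infinitely many $t\in B$ at which $X_{r:n}(t)>g_p(t)>g(t)>f(t)$, so $\prob{\mathscr E_f}=1$ whenever the divergence of $\mathscr I_f$ is supported on $B$.

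The main obstacle is the region $B$: one must calibrate $g_p$ so that it simultaneously lies in \eqref{eq:condf} and has divergent integral, and then match the random exceedance-times of $g_p$ with the deterministic set $B$ so as to produce exceedances of $f$. The bookkeeping is a direct adaptation of \citep[Lemmas~3.1 and~4.1]{Watanabe70} and \citep[Lemma~1.4]{Qualls71}, with the order-statistics asymptotic \eqref{eq:asymptotics} playing the role of the single-process Pickands asymptotic throughout.
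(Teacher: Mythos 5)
The paper does not actually supply a proof of this lemma (it defers to Watanabe and Qualls--Watanabe), so your reconstruction can only be judged on its own terms. Your convergent direction and your treatment of the regions $A=\{f>h\}$ and $C=\{g\le f\le h\}$ are the standard truncation argument and are fine. The gap is in region $B$, and it is not bookkeeping. The auxiliary function you rely on there need not exist: your own computation gives the integrand at level $g_p$ as $u^{-1}(\log u)^{1/\alpha-\hat r/2-p\hat r/2}$, so $\mathscr I_{g_p}=\infty$ forces $1/\alpha-\hat r/2-p\hat r/2\ge-1$, i.e.\ $p\le\frac{2}{\hat r}\left(\frac1\alpha+1\right)-1$. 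When $\hat r>2+2/\alpha$ (e.g.\ $\alpha=2$, $\hat r=4$, admissible since $\hat r=n-r+1$ can be as large as $n$) no such $p\ge0$ exists; indeed the lower envelope $g$ itself already has $\mathscr I_g<\infty$, so \emph{every} function satisfying \eqref{eq:condf} has finite integral and the hypothesis of the lemma only ever returns the probability-zero conclusion. Since in that regime the critical functions (the paper's $f_0$, whose $\log_2$-coefficient $\frac2{\hat r}\left(\frac1\alpha-\frac{\hat r}2+1\right)$ is then negative) lie strictly below $g$ yet have $\mathscr I_{f_0}=\infty$, a black-box reduction to \eqref{eq:condf} cannot manufacture the required probability-one statement for them: the case $f<g$ must be handled by a direct probabilistic estimate, which is what the cited classical proofs actually do.

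Even in the regime where a suitable $p>0$ does exist, your argument has a second hole: $\prob{\mathscr E_{g_p}}=1$ produces exceedances of $g_p$ at random times over which you have no control; they may all fall in $A\cup C$, where $f\ge g$ and possibly $f\ge g_p$, so they need not be exceedances of $f$. The a.s.\ envelope $X_{r:n}(t)\le(1+\varepsilon)g(t)$ (which, incidentally, follows from the unconditional convergent direction applied to $(1+\varepsilon)g$, not from $\prob{\mathscr E_h}=0$, since $h=\sqrt{3/2}\,g$) does nothing to localize those times. The classical resolution exploits monotonicity of $f$ quantitatively: a single $t_k\in B$ gives $f(t)\le f(t_k)<g(t_k)$ for \emph{all} $t\le t_k$, reducing the problem to a lower bound on $\prob{\sup_{t\in[T_k,t_k]}X_{r:n}(t)>g(t_k)}$ for a fixed level over a long interval --- a direct estimate whose success is again governed by $(t_k-T_k)\,\prob{\sup_{[0,1]}X_{r:n}>g(t_k)}\asymp(\log t_k)^{1/\alpha-\hat r/2}$. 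Until you supply that step and confront the $\hat r>2+2/\alpha$ regime, the reduction is not proved.
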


\section{Proofs of the main results}
\label{sec:Proofs}

\begin{proof}[\bf{Proof of \autoref{thm:equiv}}]
Note that the case $\mathscr I_f <\infty$ is straightforward and does not need any
additional knowledge on process $X_{r:n}$ apart from the assumption of stationarity.
Indeed, for sufficiently large $T$,
\[
\sum_{i = [T]+1}^\infty
\prob{\sup_{t\in [i, i+1]} X_{r:n}(t) > f(i)}
=
\sum_{i= [T]}^\infty \prob{\sup_{t\in[0, 1]} X_{r:n}(t) > f(i+1)}
\le
\mathscr I_f <\infty,
\]
and the Borel--Cantelli lemma completes this part of the proof since $f$ is an increasing function.

Now let $f$ be any increasing function such that $\mathscr I_f\equiv\infty$. With the same notation as in \autoref{lem:bound} with $f$ instead of $f_p$, we find that, for any $S>0$,
\begin{eqnarray*}
\prob{X_{r:n}(s) > f(s)\text{ i.o.}}
&\ge&
\prob{\left\{\sup_{t\in I_i} X_{r:n}(t) > x_i\right\}\quad\text{i.o.}}\\
&\ge&
\prob{\left\{\max_{1\le u\le L_i} X_{r:n}(s_{i,u}) > x_i\right\} \quad\text{i.o.}},
\end{eqnarray*}
where, recall, $s_{i,u}= S + i(1+\varepsilon) + u \theta x_i^{-2/\alpha}$, $L_i=[1/(\theta x_i^{-2/\alpha})]$, $\theta,\varepsilon>0$. Furthermore, for sufficiently large $S$ and $\theta$, cf. estimation of $P_1$,
\begin{equation}
\label{eq:sum}
\sum_{i=0}^\infty
\prob{ \max_{1\le u\le L_i} X_{r:n}(s_{i,u}) > x_i}
\ge
\frac{1-\varepsilon}{1+\varepsilon}
\int_{S}^\infty \prob{\sup_{t\in[0,1]} X_{r:n}(t) > f(u)}\D u=\infty.
\end{equation}
Let $E_i=\{\max_{1\le u\le L_i} X_{r:n}(s_{i,u}) \le x_i\}$, and note that
\[
1-\prob{E_i^c\quad \text{i.o.}}
=
\lim_{m\toi}\prod_{k=m}^\infty\prob{E_k} +
\lim_{m\toi}\left(\prob{\bigcap_{k=m}^\infty E_k} - \prod_{k=m}^\infty\prob{E_k}\right).
\]
The first limit is zero as a consequence of \eqref{eq:sum}, and the second limit will be zero because of the asymptotic independence of the events $E_k$. Indeed, there exist positive constants $K$ and $\rho$, such that for any $n>m$,
\[
A_{m,n}=\left|\prob{\bigcap_{k=m}^n E_k} - \prod_{k=m}^n\prob{E_k}\right|\le K (S+m)^{-\rho},
\]
by the same calculations as in the estimate of $P_2$ in \autoref{lem:bound} after realizing that, by \autoref{lem:v2},
we might restrict ourselves to the case when
\eqref{eq:condf} holds. Therefore, $\prob{E_i^c \text{ i.o.}} =1$, which finishes the proof.
\end{proof}

\noindent \textbf{Proof of \autoref{thm:main}}

\vb

\noindent\textit{Step 1.}
Let $p>1$, then, for every $\varepsilon\in(0,\frac{1}{4})$,
\[
\liminf_{t\toi}\frac{\xi_p(t) -t}{h_p(t)}\ge -(1+2\varepsilon)^2\quad\text{a.s.}
\]
\begin{proof}
Let $\{T_k:k\ge 1\}$ be a sequence such that $T_k\toi$, as $k\toi$.
Put $S_k =T_k -(1+2\varepsilon)^2 h_p(T_k)$. Then by \autoref{lem:bound},
\begin{align*}
\prob{\frac{\xi_p(T_k) - T_k}{h_p(T_k)}\le -(1+2\varepsilon)^2}
&=
\prob{\xi_p(T_k)\le S_k}
=
\prob{\sup_{S_k<t\le T_k}\frac{X_{r:n}(t)}{f_p(t)}< 1}\\
&\le
\exp\left(
-\frac{(1-\varepsilon)}{(1+\varepsilon)}\int_{S_k+1}^{T_k}\prob{\sup_{t\in[0,1]}X_{r:n}(t)> f_p(u)}\D u
\right)
+ 2K T_k^{-\rho},
\end{align*}
where the last inequality follows by the fact that $h_p(t)=o(t)$, so that $S_k\sim T_k$. Note that as $k\toi$
\begin{equation}
\label{eq:logasympt}
\int_{S_k+1}^{T_k}\prob{\sup_{t\in[0,1]}X_{r:n}(t)> f_p(u)}\D u\sim
(1+2\varepsilon)^2 h_p(T_k)\prob{\sup_{t\in[0,1]}X_{r:n}(t)> f_p(T_k)}
=
(1+2\varepsilon)^2 p \log_2 T_k.
\end{equation}
Now take $T_k = \exp(k^{1/p})$. Then
\[
\sum_{k=0}^\infty
\prob{\xi_p(T_k)\le S_k}
\le
2K\sum_{k=0}^\infty k^{-(1+\varepsilon/2)}<\infty.
\]
Hence, by the Borel--Cantelli lemma,
\begin{equation}
\label{eq:step}
\liminf_{k\toi}\frac{\xi_p(T_k)-T_k}{h_p(T_k)}\ge -(1+2\varepsilon)^2\quad\text{a.s.}
\end{equation}
Since $\xi(t)$ is a non-decreasing random function of $t$, for every $T_k\le t\le T_{k+1}$, we have
\begin{align*}
\frac{\xi_p(t)-t}{h_p(t)}\ge 
\frac{\xi_p(T_k)-T_{k}}{h_p(T_k)}- \frac{T_{k+1}-T_k}{h_p(T_k)}.
\end{align*}
For $p >1$ elementary calculus implies
\[
\lim_{k\toi}\frac{T_{k+1}- T_k}{ h_p(T_k)}  =0,
\]
so that
\[
\liminf_{t\toi}\frac{\xi_p(t)-t}{h_p(t)}\ge\liminf_{k\toi}\frac{\xi_p(T_k)-T_k}{h_p(T_k)}\quad\text{a.s.},
\]
which finishes the proof of this step.
\end{proof}

\vb

\noindent\textit{Step 2.}
Let $p>1$, then, for every $\varepsilon\in(0,\frac{1}{4})$,
\[
\liminf_{t\toi}\frac{\xi_p(t)-t}{h_p(t)}\le -(1-\varepsilon)\quad\text{a.s.}
\]
\begin{proof}
As in the proof of the lower bound, put
\[
T_k =\exp(k^{(1+\varepsilon^2)/p}),\quad S_k =T_k -(1-\varepsilon) h_p(T_k),\quad k\ge 1.
\]
Let
\[
B_k = \{\xi_p(T_k)\le S_k\}=\left\{\sup_{S_k<s\le T_k}\frac{X_{r:n}(s)}{f_p(s)}<1\right\}.
\]
It suffices to show $\prob{B_n \text{ i.o.}}=1$, that is
\begin{equation}
\label{eq:ub_goal}
\lim_{m\toi}\prob{\bigcup_{k=m}^\infty B_k} =1.
\end{equation}
Let $a_i^k = S_k + i$ and define grid points in the interval $[a_i^k,a_{i+1}^k]$ as follows
\[
a_{i,u}^k = a_i^k + u q_i^k,\quad
0\le u \le L_i^k,\quad L_i^k = [1/q_i^k],\quad
q_i^k = \theta_i^k (y_i^k)^{-\frac{2}{\alpha}},\quad
\theta_i^k = \left(y_i^k \right)^{-\frac{8}{\alpha}},\quad
y_i^k = f_p(a_i^k).
\]
Put
\[
A_k=\bigcap_{i=0}^{[T_k-S_k]}\left\{\max_{0\le u \le L_i^k} X_{r:n}(a_{i,u}^k)\le y_i^k-\theta_i^k/y_i^k\right\}.
\]
Clearly, for $m\ge 1$,
\[
\prob{\bigcup_{k=m}^\infty A_k} \le \prob{\bigcup_{k=m}^\infty B_k}+\sum_{k=m}^\infty\prob{A_k\cap B_k^c}.
\]
Put $\hat y_i^k = y_i^k-\theta_i^k/y_i^k$. Then,
by \autoref{lem:discreate_approx}, for some constants $K$ independent of $S$ and $T$, which may vary between (and among) lines,
\begin{align*}
\sum_{k=m}^\infty\prob{A_k\cap B_k^c} &\le
\sum_{k=m}^\infty\sum_{i=0}^{[T_k-S_k]}
\prob{
	\max_{0\le u \le L_i^k}X_{r:n}(a_{i,u}^k)\le \hat y_i^k,
	\sup_{s\in [0,1]} X_{r:n}(s)\ge y_i^k
	}\\
&\le 	
K\sum_{k=m}^\infty\sum_{i=0}^{\infty}
(y_i^k)^{\frac{2\hat r}{\alpha}}\Psi(y_i^k) (\theta_i^k)^{\frac{\alpha}{2}-1}\Psi\left(K (\theta^k_i)^{-\frac{\alpha}{4}}\right)\\
&\le
K\sum_{k=m}^\infty\sum_{i=0}^{\infty}
(a_i^k\log^{1-p} a_i^k)^{-1} (\log a_i^k)^{\frac{4}{\alpha}-3\alpha}\exp\left(-\frac{\log^2 a_i^k}{K}\right)\\
&\le
K\sum_{k=m}^\infty\sum_{i=0}^{\infty}(S_k+i)^{-3}
(\log (S_k+i))^{\frac{4}{\alpha}-3\alpha+p-1}\\
&\le
K\sum_{k=m}^\infty S_k^{-1}\le K m^{-4},
\end{align*}
provided $m$ is large enough. Therefore,
\[
\lim_{m\toi} \sum_{k=m}^\infty\prob{A_k\cap B_k^c} = 0
\]
and
\[
\lim_{m\toi}\prob{\bigcup_{k=m}^\infty B_k}\ge\lim_{m\toi} \prob{\bigcup_{k=m}^\infty A_k} .
\]
To finish the proof of \eqref{eq:ub_goal}, we only need to show that
\begin{equation}
\label{eq:generalBC}
\prob{A_n \text{ i.o.}}=1.
\end{equation}
Similarly to \eqref{eq:logasympt}, we have
\[
\int_{S_k}^{T_k}\prob{\sup_{t\in[0,1]}X_{r:n}(t)> f_p(u)}\D u\sim
(1-\varepsilon) p \log_2 T_k.
\]
Now from \autoref{lem:lbound} it follows that
\[
\prob{A_k} \ge
\frac{1}{4}\exp\left(
-(1-\varepsilon^2)
p \log_2 T_k
\right) - K S_k^{-\rho}\ge
\frac{1}{8}k^{-(1-\varepsilon^4)},
\]
for every $k$ sufficiently large. Hence,
\begin{equation}
\label{eq:sumA}
\sum_{k=1}^\infty \prob{A_k} =\infty.
\end{equation}
Applying \autoref{lem:Borrel}, we get for $0\le t<k$
\begin{equation}
\label{eq:AkAt}
\prob{A_kA_t}\le \prob{A_k}\prob{A_t} + M_{k,t},
\end{equation}
where, similarly to the proof of \autoref{lem:bound},
\[
M_{k,t} =
C_{n,r}
\sum_{\substack{0\le i\le [T_k-S_k]\\0\le j\le [T_t-S_t]}}
\sum_{\substack{0\le u\le L_i^k\\0\le v\le L_j^t}}
 \left(\hat y_i^k \hat y_j^t \right)^{-(n-r)}
 \left|\tilde  A_{s_{i,u}^k s_{j,v}^t}^{(r)}\right|
 \exp\left(-\frac{\hat r\left( (\hat y_i^k)^2 + (\hat y_j^t)^2\right)}{2(1+|r(s_{i,u}^k-s_{j,v}^t)|)}\right),
\]
where
\[
\left|\tilde  A_{s_{i,u}^k s_{j,v}^t}^{(r)}\right|\le K  \left| r(s_{i,u}^k - s_{j,v}^t)\right|.
\]
It is easy to see that,
\[
\frac{S_{k+1}-T_k}{T_{k+1}-T_k}\sim 1,\as k,
\]
so that, for $0\le t<k$ and $k$ large enough, and assuming without loss of generality that $\lambda<2$,
\begin{align*}
\left| r(s_{i,u}^k - s_{j,v}^t)\right|&\le r^*(S_k-T_t)\le r^*(S_k-T_{k-1})
\le K r^*\left(\half(T_k-T_{k-1})\right)
\le
2K (T_k-T_{k-1})^{-\lambda}\\&\le \min(1,\lambda)/16.
\end{align*}
Therefore,
\begin{align*}
M_{k,t} &\le
K (T_k-T_{k-1})^{-\lambda}
\sum_{\substack{0\le i\le [T_k-S_k]\\0\le j\le [T_t-S_t]}}
L_i^k L_j^t
 \exp\left(-\frac{\hat r\left( (\hat y_i^k)^2 + (\hat y_j^t)^2\right)}{2(1+\frac{\lambda}{8})}\right) \\
&\le
K (T_k-T_{k-1})^{-\lambda}
L_{[T_k-S_k]}^k L_{[T_t-S_t]}^t
\sum_{\substack{0\le i\le [T_k-S_k]\\0\le j\le [T_t-S_t]}}
(a_i^k)^{-\frac{1}{1+\frac{\lambda}{4}}}
(a_j^t)^{-\frac{1}{1+\frac{\lambda}{4}}}\\
&\le
K (T_k-T_{k-1})^{-\lambda}
\log^{\frac{5}{\alpha}} T_k
\log^{\frac{5}{\alpha}} T_t
\cdot
T_k^{\frac{\lambda}{4}}
T_t^{\frac{\lambda}{4}}\\
&\le
K
T_k^{-\frac{\lambda}{4}}\le K \exp(-\lambda k^{(1+\varepsilon^2)/p}/4).
\end{align*}
Hence we have,
\begin{equation}
\label{eq:sumC}
\sum_{0\le t<k<\infty} M_{k,t} <\infty.
\end{equation}
Now \eqref{eq:generalBC} follows from \eqref{eq:AkAt}, \eqref{eq:sumC} and \eqref{eq:sumA} and the general form of the Borel--Cantelli lemma.
\end{proof}

\vb

\noindent\textit{Step 3.}
If $p\in(0,1]$, then, for every $\varepsilon\in(0,\frac{1}{4})$,
\begin{equation}
\label{eq:step3eq1}
\liminf_{t\toi}\frac{\log\left(\xi_p(t)/t\right)}{h_p(t)/t}\ge -(1+2\varepsilon)^2\quad\text{a.s.}
\end{equation}
and
\begin{equation}
\label{eq:step3eq2}
\liminf_{t\toi}\frac{\log\left(\xi_p(t)/t\right)}{h_p(t)/t}\le -(1-\varepsilon)\quad\text{a.s.}
\end{equation}
\begin{proof}
Put
\[
T_k = \exp(k^{1/p}),\quad S_k = T_k \exp\left(-(1+2\varepsilon)^2h_p(T_k)\right).
\]
Proceeding the same as in the proof of \eqref{eq:step}, one can obtain that
\[
\liminf_{k\toi}\frac{\log\left(\xi_p(T_k)/T_k\right)}{h_p(T_k)/T_k}\ge -(1+2\varepsilon)^2\quad\text{a.s.}
\]
On the other hand it is clear that
\[
\liminf_{t\toi}\frac{\log\left(\xi_p(t)/t\right)}{h_p(t)/t}
\ge
\liminf_{k\toi}\frac{\log\left(\xi_p(T_k)/T_k\right)}{h_p(T_k)/T_k}\quad\text{a.s.}
\]
since
\[
\lim_{k\toi}\frac{\log\left(T_k/T_{k+1}\right)}{h_p(T_k)/T_k}=0.
\]
This proves \eqref{eq:step3eq1}.

Let
\[
T_k = \exp\left(k^{(1+\varepsilon^2)/p}\right),\quad S_k = T_k\exp\left(-(1-\varepsilon)h_p(T_k)\right).
\]
Noting that
\[
\frac{S_{k+1}-T_k}{S_{k+1}}\sim 1\quad\as k,
\]
along the same lines as in the proof of \eqref{eq:ub_goal}, we also have
\[
\liminf_{k\toi}\frac{\log\left(\xi_p(T_k)/T_k\right)}{h_p(T_k)/T_k}\le -(1-\varepsilon)\quad\text{a.s.},
\]
which proves \eqref{eq:step3eq2}.
\end{proof}

{\bf Acknowledgement}:
K. D\polhk{e}bicki was partially supported by
National Science Centre Grant No. 2015/17/B/ST1/01102 (2016-2019).
Research of K. Kosi\'nski was conducted under scientific
Grant No. 2014/12/S/ST1/00491 funded by National Science Centre.

\small\bibliography{biblioteczka}

\begin{thebibliography}{14}
\providecommand{\natexlab}[1]{#1}
\providecommand{\url}[1]{\texttt{#1}}
\expandafter\ifx\csname urlstyle\endcsname\relax
  \providecommand{\doi}[1]{doi: #1}\else
  \providecommand{\doi}{doi: \begingroup \urlstyle{rm}\Url}\fi

\bibitem[D\polhk{e}bicki et~al.(2014)D\polhk{e}bicki, Hashorva, Ji, and
  Tabi\'s]{Debicki14}
K.~D\polhk{e}bicki, E.~Hashorva, L.~Ji, and K.~Tabi\'s.
\newblock On the probability of conjunctions of stationary {G}aussian process.
\newblock \emph{Stat. Probab. Lett.}, 88:\penalty0 141--148, 2014.

\bibitem[D\polhk{e}bicki et~al.(2015{\natexlab{a}})D\polhk{e}bicki, Hashorva,
  Ji, and Ling]{Debicki15}
K.~D\polhk{e}bicki, E.~Hashorva, L.~Ji, and C.~Ling.
\newblock Extremes of order statistics of stationary processes.
\newblock \emph{Test}, 24:\penalty0 229--248, 2015{\natexlab{a}}.

\bibitem[D\polhk{e}bicki et~al.(2015{\natexlab{b}})D\polhk{e}bicki, Hashorva,
  Ji, and Tabi{\'s}]{DHJT15}
K.~D\polhk{e}bicki, E.~Hashorva, L.~Ji, and K.~Tabi{\'s}.
\newblock Extremes of vector-valued {G}aussian processes: exact asymptotics.
\newblock \emph{Stoch. Process. Appl.}, 125:\penalty0 4039--4065,
  2015{\natexlab{b}}.

\bibitem[D\polhk{e}bicki et~al.(2016)D\polhk{e}bicki, Hashorva, Ji, and
  Ling]{DHJL15}
K.~D\polhk{e}bicki, E.~Hashorva, L.~Ji, and C.~Ling.
\newblock Comparison inequalities for order statistics of {G}aussian arrays.
\newblock \emph{Submitted}, 2016.

\bibitem[Hashorva et~al.(2013)Hashorva, Ji, and V.I.]{Hashorva13}
E.~Hashorva, L.~Ji, and Piterbarg V.I.
\newblock On the supremum of $\gamma$-reflected processes with fractional
  {Brownian} motion as input.
\newblock \emph{Stoch. Process. Appl.}, 123:\penalty0 4111--4127, 2013.

\bibitem[Hashorva et~al.(2015)Hashorva, Korshunov, and Piterbarg]{HKP15}
E.~Hashorva, D.~Korshunov, and V.I. Piterbarg.
\newblock Asymptotic expansion of {G}aussian chaos via probabilistic approach.
\newblock \emph{Extremes}, 18:\penalty0 315--347, 2015.

\bibitem[Leadbetter et~al.(1983)Leadbetter, Lindgren, and
  Rootzen]{Leadbetter83}
M.R. Leadbetter, G.~Lindgren, and H.~Rootzen.
\newblock \emph{Extremes and Related Properties of Random Sequences and
  Processes}.
\newblock Springer, Berlin, 1983.

\bibitem[Pickands(1969)]{Pickands69a}
J.~Pickands, III.
\newblock Upcrossing probabilities for stationary {G}aussian processes.
\newblock \emph{Trans. Am. Math. Soc.}, 145:\penalty0 51--73, 1969.

\bibitem[Piterbarg(1996)]{Piterbarg96}
V.I. Piterbarg.
\newblock \emph{Asymptotic Methods in the Theory of Gaussian Processes and
  Fields}, volume 148 of \emph{Translations of Mathematical Monographs}.
\newblock American Mathematical Society, Providence, 1996.

\bibitem[Piterbarg and Prisyazhnyuk(1978)]{Piterbarg78}
V.I. Piterbarg and V~Prisyazhnyuk.
\newblock Asymptotic behavior of the probability of a large excursion for a
  nonstationary {Gaussian} processes.
\newblock \emph{Theory Probab. Math. Stat.}, 18:\penalty0 121--–133, 1978.

\bibitem[Qualls and Watanabe(1971)]{Qualls71}
C.~Qualls and H.~Watanabe.
\newblock An asymptotic 0-1 behavior of {Gaussian} processes.
\newblock \emph{Ann. Math. Stat.}, 42\penalty0 (6):\penalty0 2029--2035, 1971.

\bibitem[Shao(1992)]{Shao92}
Q.M. Shao.
\newblock An {Erd\"os--R\'ev\'esz} type law of the iterated logarithm for
  stationary {Gaussian} processes.
\newblock \emph{Probab. Theory Relat. Fields}, 94:\penalty0 119--133, 1992.

\bibitem[Spitzer(1964)]{Spitzer64}
F.~Spitzer.
\newblock \emph{Principles of Random Walk}.
\newblock Princeton: Van Nostrand, 1964.

\bibitem[Watanabe(1970)]{Watanabe70}
H.~Watanabe.
\newblock An asymptotic property of {Gaussian} processes.
\newblock \emph{Am. Math. Soc.}, 148\penalty0 (1):\penalty0 233--248, 1970.

\end{thebibliography}

\end{document}